\documentclass[11pt]{article}
\topmargin .30in
\textheight 8.50in
\textwidth 6.00in
\headsep 0in
\oddsidemargin 0.26 in

\usepackage{amsmath}
\usepackage{amsthm}
\usepackage{amssymb}
\usepackage{graphicx, epsfig}
\usepackage{color}
\usepackage{array}

\def\a{\mu}
\def\b{\beta}
\def\g{\gamma}
\def\r{\rho}

\def\om{\overline{\mu}}
\def\ov{\overline{v}}

\newtheorem{theorem}{Theorem}
\newtheorem{lemma}[theorem]{Lemma}

\newtheorem{claim}[theorem]{Claim}

\newcommand{\fee}{{\mathcal F}}

\newcommand{\cb}{{\mathcal B}}
\newcommand{\cn}{{\mathcal N}}
\newcommand{\cm}{{\mathcal M}}
\newcommand{\cd}{{\mathcal D}}
\newcommand{\cx}{{\mathcal X}}
\newcommand{\cy}{{\mathcal Y}}
\newcommand{\cz}{{\mathcal Z}}

\newcommand{\cp}{{\mathcal P}}
\newcommand{\e}{\epsilon}

\def\t{\tau}

\begin{document}

\title{The Triangle-Free Process}

\author{Tom Bohman \thanks{Department of Mathematical Sciences,
Carnegie Mellon University, Pittsburgh, PA, USA. Supported in part by
NSF grant DMS 0701183. E-mail: \texttt{tbohman@math.cmu.edu}}
}

\maketitle

\begin{abstract}
\noindent
Consider the following stochastic graph process.  We begin with \(G_0\), 
the empty 
graph on \(n\) vertices, and form  \( G_{i} \) by adding a randomly chosen edge \(e_i\) to 
\( G_{i-1} \) where \( e_i\) is chosen
uniformly at random from the collection of pairs of vertices that 
neither appear as edges in \(G_{i-1}\) nor form triangles when 
added as edges to \(G_{i-1}\).  Let the random variable \(M\) be 
the number of edges in the
maximal triangle free graph generated by this process.  We prove
that asymptotically almost surely 
\( M = \Theta (n^{3/2} \sqrt{ \log n} ) \).  This resolves a conjecture
of Spencer.  Furthermore, the
independence number of \( G_M\) is asymptotically almost surely 
\( \Theta(\sqrt{ n \log n} ) \), which implies that the Ramsey number
\( R(3,t) \) is bounded below by a constant times \( t^2/ \log t \) (a fact that was previously
established by Jeong Han Kim).  The methods introduced here extend to the \(K_4\)-free
process, thereby establishing the bound \( R(4,t) = \Omega( t^{5/2}/ \log^2 t ) \).
\end{abstract}

\section{Introduction}

Consider the following constrained random graph process.
We begin with the empty graph on \(n\) vertices, which we denote \( G_0 \).
At step \(i\) we form the graph \( G_{i} \) by adding an edge to \( G_{i-1} \) chosen uniformly
at random from the collection of pairs of vertices that neither appear as edges in \( G_{i-1} \)
nor form triangles (i.e. copies of \(K_3\)) when added as edges to \(G_{i-1}\).  The process
terminates with a maximal triangle-free graph on \(n\) vertices, which we denote \( G_M \) (thus the
random variable \(M\) is the number of steps in the process).
We are interested in the likely
structural properties of \(G_M\) as \(n\) tends to infinity; for example, we 
would like to know the value of \(M\) and the independence number of \(G_M\).  

The study of this
graph processes began by the late 1980's (see Bollob\'as \cite{bela!}). 
The first published result on a process 
which iteratively adds edges chosen uniformly at random 
from the collection of potential edges that maintain some graph property 
is
due to Ruci\'nski and Wormald, who answered a question of Erd\H{o}s regarding 
the process in which we maintain
a bound on the maximum degree \cite{rw}.  Erd\H{o}s, Suen and Winkler considered both the triangle-free
process and the odd-cycle-free process 
\cite{esw}.  
The \(H\)-free process where \(H\) is a fixed graph was
treated by Bollob\'as and Riordan \cite{br} as well as Osthus and Taraz \cite{ot}.  While these papers
establish interesting bounds on the likely number edges in the graph produced by the 
\(H\)-free process for some
graphs \(H\), for no graph \(H\) that contains a cycle has the exact order of
magnitude 
been determined.

Another motivation for the triangle-free process comes 
from Ramsey theory.  The Ramsey number \( R(k,\ell) \) is the minimum integer \(n\) such that
any graph on \(n\) vertices contains a clique on \(k\) vertices or an independent set on
\( \ell \) vertices.  The Ramsey numbers play a central role in 
combinatorics and are the subject of many notoriously 
difficult problems, most of which remain widely open (see, for example, \cite{grs} \cite{cg}).  
One problem regarding 
the Ramsey numbers that
has been resolved is the order of magnitude of \( R(3,t) \) 
as \(t\) tends to infinity: Ajtai, Koml\'os and Szemer\'edi proved the upper bound 
\( R(3,t) = O( t^2/ \log t ) \) and
Kim established the lower bound \( R(3,t) = \Omega( t^2/ \log t) \).  (There were a number of
significant steps over the course of about 30 years that led up to these final results; see 
\cite{e1}, \cite{el}, \cite{e2},  \cite{e3}, \cite{e4}, 
\cite{esw}, \cite{e5}, \cite{s}.)  
The problem of determining the asymptotic behavior of \( R(3,t) \)
was one of the motivations
for the introduction of the triangle-free process.  Indeed, we establish a lower bound of the form
\( R(3,t) > n \) by proving the existence of a graph on \(n\) vertices 
with neither a triangle nor an independent set on \(t\) vertices, and 
the triangle-free process should produce such a graph as it should include enough `random' edges
to eliminate all large independent sets.  In a certain sense, Kim's celebrated result verified 
this intuition as he used a semi-random variation on the triangle-free process.  (This was
an application of the powerful R\"odl nibble that was inspired by an approach to the
triangle-free process proposed by Spencer \cite{s}.)
However, the problem of whether or not the triangle-free process itself is likely to
produce a Ramsey \( R(3,t) \) graph remained open.

Our main results (Theorems~\ref{thm:lower}~and~\ref{thm:upper} below) have the following Corollaries.
\begin{theorem}
\label{thm:show1}
Let the random variable \(M\) be the number of edges in the graph on \(n\) 
vertices formed by the triangle-free
process. There are constants \( c_1, c_2 \) such that asymptotically almost surely we have
\[ c_1 \sqrt{ \log n} \cdot n^{3/2} \le M \le  c_2 \sqrt{ \log n} \cdot n^{3/2}. \]
\end{theorem}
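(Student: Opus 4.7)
My plan is to prove Theorem~\ref{thm:show1} by the differential-equation method applied to a carefully chosen family of random variables tracking the evolution of the process; the theorem is a straightforward consequence of matching upper and lower bounds on the first time the process runs out of allowed edges.

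The heuristic driving everything is that, after $i$ steps, every vertex should have degree approximately $d(i) := 2i/n$, and for any specific non-adjacent pair $\{u,v\}$ the events ``$w$ is a common neighbor'' over $w \notin \{u,v\}$ should behave approximately independently, so the pair is open (i.e.\ addable without creating a triangle) with probability roughly $(1 - d^2/n^2)^n \approx e^{-d^2/n} = e^{-4i^{2}/n^{3}}$. Thus if $Q(i)$ denotes the number of open pairs, the right trajectory is $q(i) = \binom{n}{2} e^{-4i^{2}/n^{3}}$. This suggests the natural time parameter $t = i/n^{3/2}$, under which $Q(tn^{3/2})/n^{2} \to \frac{1}{2}e^{-4t^{2}}$, and the process should end when $e^{-4t^{2}} \sim 1/n$, i.e.\ when $t = \Theta(\sqrt{\log n})$, yielding $M = \Theta(n^{3/2}\sqrt{\log n})$.

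To turn this heuristic into a proof I would first set up a system of random variables: for each vertex $v$ the degree $d_{v}(i)$; for each non-edge $\{u,v\}$ the number $X_{uv}(i)$ of common neighbors (so the pair is open iff it is a non-edge with $X_{uv}=0$); and the global counts $Q(i)$, $E(i)=i$, etc. For each, I would compute the expected one-step change conditioned on $G_{i-1}$: the key ratios are all controlled by $Q(i)$ (since the denominator in the uniform edge choice is $Q(i)$), which is precisely why the tracking equations close. From these I would derive the deterministic ``trajectories'' $\bar{d}(t),\ \bar q(t)$ and show inductively that the corresponding random variables stay within suitably shrinking error bars around them. Concentration is enforced by constructing, for each tracked quantity, a supermartingale and a submartingale from the natural error $Y = X - \bar{X}(i/n^{3/2}) \pm (\text{error envelope})$, and applying an Azuma-type bound to show that the bad event that the envelope is breached has probability $o(1/n^{C})$, so a union bound over all variables still works.

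The hard part, and essentially the whole content of Theorems~\ref{thm:lower}~and~\ref{thm:upper}, is that the window over which we need tracking has length $\Theta(n^{3/2}\sqrt{\log n})$, so naive Azuma bounds accumulate error of order $n^{3/4}(\log n)^{1/4}$ per step, which is far too large. The fix is a self-correcting martingale argument: one chooses the error envelopes so that the expected one-step drift of $Y$ points back toward $0$ whenever $Y$ is near the boundary of the envelope. This turns the trajectory into an attractor and lets the same polynomial concentration bound hold uniformly over the entire process. Once this is in place, the upper bound follows by taking $i$ slightly larger than the time at which $\bar q(t)$ drops below $n^{\epsilon}$; concentration then forces $Q(i)=0$, whence $M \le c_{2} n^{3/2}\sqrt{\log n}$. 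The lower bound follows by taking $i$ slightly smaller than this same time; concentration forces $Q(i) > 0$, so the process has not yet terminated and $M \ge c_{1} n^{3/2}\sqrt{\log n}$. This yields Theorem~\ref{thm:show1}.
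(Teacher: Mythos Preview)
Your outline for the \emph{lower} bound is essentially the paper's: track a closed family of variables through the process by the differential-equations method, show they hug the trajectory $q(t)=\tfrac12 e^{-4t^2}$ (etc.) out to $t=c_1\sqrt{\log n}$, and conclude $Q(i)>0$ there so $M\ge c_1 n^{3/2}\sqrt{\log n}$. (A minor point: the variables the paper actually tracks are not degrees and common-neighbour counts but, for each non-edge $\{u,v\}$, the number of vertices $w$ with both $\{u,w\},\{v,w\}$ open, the number with one open and one an edge, and the number with both edges. The second of these is what governs the one-step change in $Q$, so it is the natural object; your $X_{uv}$ is the paper's $Z_{u,v}$ and plays only an auxiliary role.)

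The genuine gap is in your upper bound. You propose to push the tracking past the point where $\bar q(t)n^2$ drops below $n^\varepsilon$ and conclude $Q(i)=0$. This does not work, and the paper says so explicitly: ``Theorem~\ref{thm:lower} alone places no upper bound on $M$.'' The obstruction is structural, not a matter of choosing cleverer envelopes or invoking self-correction. All one-step expected changes carry a factor $1/Q(i)$, and the Lipschitz bound on the increment of $Q$ is the maximum of $|Y_{u,v}|$, which is of order $\sqrt n$; so the accumulated martingale error in $Q$ over $m$ steps is of order at least $\sqrt{m\cdot\sqrt n\cdot n^{-1/2}}=\sqrt m$, already far larger than any $n^\varepsilon$. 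More to the point, the error envelopes in any tracking argument of this type grow like $e^{Ct^2}$ while $q(t)$ decays like $e^{-4t^2}$, so the envelope swallows the trajectory at $t=c\sqrt{\log n}$ for some $c$ depending on $C$, but never at the termination time $t\approx\tfrac12\sqrt{\log n}$ itself. Concentration can certify $Q>0$ at a constant fraction of the lifetime; it cannot certify $Q=0$.

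The paper's route to the upper bound is entirely different and is the content of Theorem~\ref{thm:upper}: one shows that a.a.s.\ $G_m$ (at $m=\mu n^{3/2}\sqrt{\log n}$, well before termination) already has independence number $O(\sqrt{n\log n})$. Since every neighbourhood in a triangle-free graph is independent, the maximum degree in the final graph $G_M$ is $O(\sqrt{n\log n})$, and hence $M\le \tfrac12 n\cdot O(\sqrt{n\log n})=O(n^{3/2}\sqrt{\log n})$. Proving the independence-number bound is itself a substantial argument: for each $K$ of size $\gamma\sqrt{n\log n}$ one must show that $\binom{K}{2}\cap O_i$ stays of order $e^{-4t^2}|K|^2$ throughout, which requires controlling how many open pairs inside $K$ can be closed by vertices with abnormally many neighbours in $K$. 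Your plan is missing this entire component.
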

\noindent
\begin{theorem}  
\label{thm:show2}
There is a constant \( c_3 \) such that the following holds:  
If \( n = n(t) < c_3 \cdot t^2/ \log t \) then a.a.s. the triangle-free process on \(n\) vertices 
produces a graph with no independent set of cardinality \(t\).
Thus
\( R(3,t) \ge  c_3 \cdot \frac{ t^2 }{ \log t} \) for \(t\) sufficiently large.
\end{theorem}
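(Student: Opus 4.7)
The plan is to deduce Theorem \ref{thm:show2} as a short corollary of Theorem \ref{thm:upper}, which (per the abstract) asserts that a.a.s.\ the independence number of $G_M$ satisfies $\alpha(G_M) = O(\sqrt{n \log n})$. Since every graph produced by the process is triangle-free by construction, the only thing to verify is that the hypothesis $n < c_3 \cdot t^2/\log t$ forces this upper bound on $\alpha(G_M)$ to drop below $t$.

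The execution is a constant-calibration. Let $c_2$ denote a constant for which Theorem~\ref{thm:upper} yields $\alpha(G_M) \le c_2 \sqrt{n \log n}$ a.a.s., and assume $n \le c_3 \cdot t^2/\log t$. For $t$ sufficiently large this gives
\[
\log n \;\le\; \log c_3 + 2\log t - \log\log t \;\le\; 3 \log t,
\]
so
\[
c_2 \sqrt{n \log n} \;\le\; c_2 \sqrt{\frac{c_3 t^2}{\log t} \cdot 3 \log t} \;=\; c_2 \sqrt{3 c_3}\; t.
\]
Fixing $c_3 := 1/(4 c_2^2)$ makes the right-hand side at most $\tfrac{\sqrt{3}}{2}\, t < t$. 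Consequently, with $n = \lfloor c_3 \cdot t^2/\log t \rfloor$ the random graph $G_M$ is a.a.s.\ a triangle-free graph on $n$ vertices whose independence number is strictly less than $t$. In particular such a graph exists, so by definition of the Ramsey number $R(3,t) > n$, yielding the asserted lower bound for all sufficiently large $t$ (and then, after possibly shrinking $c_3$ to absorb small values of $t$, for every $t$).

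The only obstacle to the above derivation is of course Theorem \ref{thm:upper} itself: establishing the bound $\alpha(G_M) = O(\sqrt{n \log n})$ a.a.s.\ is the main technical achievement of the paper and requires tracking the evolution of degrees and co-degrees through the triangle-free process. Granted Theorem \ref{thm:upper}, the Ramsey consequence is essentially immediate, which is why Kim's argument — although it used a semi-random variant rather than the full triangle-free process — served as a natural blueprint for the present result.
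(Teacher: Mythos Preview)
Your derivation is correct and matches the paper, which simply states that Theorem~\ref{thm:show2} is ``a direct consequence of Theorem~\ref{thm:upper}.'' One small imprecision: Theorem~\ref{thm:upper} as stated bounds $\alpha(G_m)$ (with $m=\mu\sqrt{\log n}\,n^{3/2}$) conditional on $\overline{\mathcal{B}_m}$, so you should invoke Theorem~\ref{thm:lower} to remove the conditioning and note that $\alpha(G_M)\le\alpha(G_m)$ since $G_m\subseteq G_M$; with that, your constant calibration goes through exactly as written.
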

\noindent
Theorem~\ref{thm:show1}  proves a conjecture of Spencer \cite{s}.  Theorem~\ref{thm:show2}, which 
establishes that the triangle-free process is an effective randomized algorithm for producing
a Ramsey \( R(3,t) \) graph, is a direct consequence of 
Theorem~\ref{thm:upper} below.  These results reveal a visionary aspect of 
the 1961 paper of Erd\H{o}s \cite{e1} which
established the bound \( R(3,t) = \Omega( t^2/ \log^2 t ) \).
When the probabilistic method was in its infancy, Erd\H{o}s established his bound by analyzing a
greedy algorithm applied to a random graph, and it turns out that a random greedy algorithm produces a 
Ramsey \( R(3,t) \) graph.
For an explicit construction of a triangle-free graph on \( \Theta( t^{3/2}) \) vertices
with independence number \(t\) see Alon \cite{a}.

The methods introduced here can be 
applied to other processes.  In fact, our methods immediately 
suggest an approach to the \(H\)-free
process for general \(H\).  This applies to hypergraph 
processes as well.  
As an example, we analyze the \( K_4 \)-free process to prove the
following result:
\begin{theorem}
\label{thm:extra}  There is a constant \( c_4\) such that for \(t \) sufficiently large we have
\[ R(4,t) > c_4 \cdot  \frac{ t^{5/2} }{ \log^2 t}. \]
\end{theorem}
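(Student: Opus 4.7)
The plan is to prove Theorem~\ref{thm:extra} by analyzing the $K_4$-free process on $n = c_4 t^{5/2}/\log^2 t$ vertices, closely parallel to how Theorem~\ref{thm:show2} is derived from an analysis of the triangle-free process. Since the graph produced is $K_4$-free, it suffices to show that for a sufficiently small constant $c_4$, the graph at termination a.a.s.\ contains no independent set of size $t$.

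In the $K_4$-free process, call a non-edge $\{u,v\}$ \emph{open} at step $i$ if adding it does not create a $K_4$; equivalently, if the induced subgraph on $N(u) \cap N(v)$ in the current graph has no edge. I would apply the differential equation method to predict trajectories of the relevant statistics: the number of edges, the number $Q(i)$ of open pairs, the vertex degrees, the co-degrees of non-edges, and the number of edges inside each common neighborhood. A heuristic calculation---each new edge $uv$ closes on the order of $\binom{|N(u)\cap N(v)|}{2} \sim p^4 n^2 / 2$ previously-open pairs by contributing a ``diamond'' that forbids their addition---predicts that the process persists until the edge density reaches $p \sim n^{-2/5}(\log n)^{1/5}$, giving $M \sim n^{8/5}(\log n)^{1/5}$ edges.

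Granted such concentration of the dynamics around the deterministic trajectories, the independence number bound follows from a union bound. For any fixed $t$-set $S$, the fraction of open pairs lying inside $S$ should remain close to $\binom{t}{2}/\binom{n}{2}$ throughout the process, so the probability that no edge inside $S$ is ever selected is at most roughly $\exp(-\Omega(M t^2/n^2))$. Comparing with $\binom{n}{t} \le \exp(t\log n)$, the union bound succeeds when $M \gg n^2 \log n / t$; substituting the predicted $M$ and solving yields $t \sim n^{2/5}(\log n)^{4/5}$, equivalently $n \sim t^{5/2}/\log^2 t$, so a sufficiently small $c_4$ does the job.

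The main obstacle, as in the triangle-free case, is establishing tight concentration of the dynamics. The $K_4$-free process is strictly harder because openness of $\{u,v\}$ is a second-order property---it depends on the edge structure inside $N(u)\cap N(v)$---so a richer hierarchy of auxiliary random variables must be tracked simultaneously: co-degrees, edge counts inside common neighborhoods, and counts of ``near-$K_4$'' configurations governing how pairs become closed. I would follow the template used for the triangle-free process: define self-correcting error terms around each trajectory, compute one-step conditional expectations and variances, and apply a Freedman-type martingale inequality to propagate tight error bounds throughout the process. The coupled bootstrap between these variables constitutes the technical core of the proof and is what makes the $K_4$ analysis substantially more delicate than the $K_3$ case.
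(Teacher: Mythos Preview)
Your plan matches the paper's approach closely: analyze the $K_4$-free process via the differential-equations method, track a hierarchy of ``near-$K_4$'' extension variables (the paper uses $X_{A,f}$ for $A\in\binom{[n]}{2}$, $f\in\{0,\dots,4\}$, and auxiliary $Y_{A,f}$ for $A\in\binom{[n]}{3}$), establish concentration around the trajectory $q(t)=\tfrac12 e^{-16t^5}$ with scaling $t=i/n^{8/5}$, and then bound the independence number by a union bound over $K$ of size $\Theta(n^{2/5}\log^{4/5}n)$.

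One place where your sketch is too optimistic: the claim that ``the fraction of open pairs lying inside $S$ should remain close to $\binom{t}{2}/\binom{n}{2}$'' is \emph{not} routine for $K_4$. Unlike the triangle-free process, an edge $e_i$ \emph{disjoint from} $K$ can close $\binom{|N(e_i)\cap K|}{2}$ pairs inside $K$ in a single step, so the one-step change in $|O_i\cap\binom{K}{2}|$ has no good Lipschitz bound, and a single high-degree vertex could even swallow all of $K$ in its neighborhood. The paper handles this by introducing a relaxed notion of ``closed with respect to $A\times B$'' (or ``with respect to $D$'') that ignores closures caused by vertices with atypically large intersection with $K$, proves concentration for \emph{that} quantity, and then uses a separate density claim to show that within any independent $K$ one can always find subsets $A,B$ (or $D$) where the relaxed and true counts nearly agree. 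This second stage is where much of the $K_4$-specific work lies, and your outline does not anticipate it.

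A second, smaller point you will hit: the correspondence between $X_{e_i,4}(i-1)$ and the set of pairs actually closed by $e_i$ is not one-to-one (a closed pair incident to $e_i$ can sit in several $K_4^-$'s), so even the trajectory argument for $Q(i)$ requires an extra density lemma bounding the overcount.
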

\noindent
This is a minor improvement on the previously best known lower bound,
\( \Omega \left( (t/ \log t)^{5/2} \right) \), which was established
by Spencer via an application of the Lov\'asz Local Lemma \cite{e3}. 

We analyze the triangle-free process by an application of the so-called differential
equations method for random graph processes (see Wormald \cite{w} for an 
introduction to the method).  The main idea is to identify a collection of random
variables whose one-step expected changes can be written in terms of the 
random variables in the
collection.  These expressions yield an autonomous system of ordinary differential
equations, and we prove that the random variables in our collection (appropriately scaled) 
are tightly concentrated around the trajectory given by the solution of the ode.
Recent applications of this method include results that link the emergence of a giant 
component in a random graph process to a blow-up point in an associated ode
\cite{bk}, \cite{sw}, \cite{bbfp} and an analysis of a randomized matching algorithm
that hinges on the existence of an invariant set in an associated ode \cite{bf}.

We track the following random variables through the evolution 
of the triangle-free process.   Recall that
\( G_i\) is the graph given by the first \(i\) edges selected by the process.
The graph \( G_i \) partitions \( \binom{ [n]}{2} \) into three parts: \( E_i, O_i \) and
\(C_i\).  The set \( E_i\) is simply the edge set of \( G_i\).  A pair \( \{u,v\} \in \binom{[n]}{2} \) is {\bf open}, and 
in the set \( O_i \), if it can still be added as an edge without violating the triangle-free
condition.  A pair \( \{u,v\} \in \binom{ [n]}{2} \) is {\bf closed}, and in the set \( C_i\), if it is neither an 
edge in the graph nor open; that is, the pair \( \{u,v\} \) is in  \( C_i \) if 
there some vertex \(w\) such that \( \{u,w\}, \{v,w\} \in E_i \).  Note that \( e_{i+1} \) is chosen
uniformly at random from \( O_i\).  Set
\( Q(i) = | O_i | \); this is one of the random variables we track.
For 
each pair \( \{u,v\} \in \binom{[n]}{2} \) we track three random variables.  Let
\( X_{u,v} (i) \) be the set of vertices \(w\) such that 
\( \{u,w\}, \{v,w\} \in O_i \).  Let \( Y_{u,v}(i) \) be the set of vertices \(w\) such that
\[ \left| \left\{ \{ u,w\} , \{v,w\} \right\} \cap O_i \right| =
 \left| \left\{ \{ u,w\} , \{v,w\} \right\} \cap E_i \right| = 1.\]
Finally, let \( Z_{u,v}(i) \) be the set of vertices \(w\) such that
\( \{ u,w\}, \{v,w \} \in E_i \).  Note that if \( Z_{u,v}(i) \neq \emptyset \) then
we have \( \{ u,v\} \in C_i \).  
We dub vertices in \( X_{u,v} \) {\bf open} with respect to
\( \{u, v\} \), vertices in \( Y_{u,v} \) {\bf partial} with respect to \( \{u,v\} \)
and vertices
in \( Z_{u,v} \) {\bf complete} with respect to \( \{u,v\} \).  
We track the 
variables \( \left| X_{u,v} (i) \right|\), \( \left|Y_{u,v} (i) \right| \) and 
\( \left| Z_{u,v}(i) \right| \) for all 
pairs \(u,v \) such that
\( \{ u,v \} \not\in E_i \).  (In fact, we only show that \( |Z_{u,v}(i) | \) does not
get too large; so we track this random variable in the sense that we bound it).
We emphasize that we make no claims 
regarding the number of open, partial and complete
vertices with respect to pairs \(\{ u,v \} \) that are edges in the graph.  Formally, we set
\( X_{u,v}(i) = X_{u,v}(i-1) \), \( Y_{u,v}(i) = Y_{u,v}(i-1) \) and 
\( Z_{u,v}(i) = Z_{u,v}(i-1) \) if 
\( \{u,v\} \in E_i \).

In order to motivate our main results (Theorems~\ref{thm:lower}~and~\ref{thm:upper} below) we 
present a heuristic derivation of the trajectory that the random 
variables \( Q(i), \left| X_{u,v}(i) \right| \) and \( \left| Y_{u,v}(i) \right| \) should follow.  We stress that this
discussion does not constitute a proof that the random variables follow this trajectory; the
proof itself comes in Section~\ref{sec:lower} below.
We begin by choosing appropriate scaling.  We introduce a continuous variable \(t\) and relate 
this to the steps \(G_i\) in the process by setting 
\( t = t(i) = i/ n^{3/2} \).  Our trajectories are given by three functions: 
\(q(t), x(t)\) and \( y(t) \). We suppose \( Q(i) \) is approximately \( q(t) n^2 \),
\( |X_{u,v}(i)| \) is approximately \( x(t) n \) for all 
\( \{u,v\} \in \binom{[n]}{2} \setminus E_i \)
and
\( |Y_{u,v}(i)| \) is approximately \( y(t) \sqrt{n} \) for all 
\( \{u,v\} \in \binom{[n]}{2} \setminus E_i \).  Consider a fixed step \(i\) in the graph process and let 
\( \epsilon > 0 \) be sufficiently small. 
We suspect that the changes in our tracked 
random variables are very close to their expected values over the ensuing \( \epsilon n^{3/2} \)
steps of the process and use this guess to derive our system of differential equations.
We begin with \( |O_i| \).
Note that if \( e_{i+1} = \{u,v\}  \) then there is exactly one edge
closed for each vertex that is partial with respect to \( \{u,v\} \); in other words, if 
\( e_{i+1} = \{u,v\}  \) then \( Q(i+1) = Q(i) - 1 - |Y_{u,v}(i)| \).  Therefore, we should have
\[ q(t+\epsilon) n^2 \approx Q \left( i + \epsilon n^{3/2} \right) \approx Q(i) - \epsilon n^{3/2} \cdot 
y(t) n^{1/2} \approx \left( q(t) - \epsilon y(t) \right) n^{2}. \]
This suggests \( dq/dt = - y \).
Now consider the variable \( \left| X_{u,v}(i) \right| \).  Consider a fixed vertex \(w\) that is open with respect to
\( \{u,v\} \).  Note that the probability that the edge \( e_{i+1} \) closes \( \{u,w\} \) (i.e. the
probability of the event \( \{u,w\} \in C_{i+1} \)) is \( | Y_{u,w} |/ |O_i| \).  As the probability that
\( e_{i+1} \in \{ \{u,w\}, \{v,w\} \} \) is comparatively negligible, we suspect that we have
\begin{equation*}
\begin{split}
x(t + \epsilon) n & \approx  \left| X_{u,v} \left( i + \epsilon n^{3/2} \right) \right| \\
& \approx 
\left| X_{u,v}(i) \right| - \epsilon n^{3/2} \cdot
x(t) n \frac{ 2 y(t) \sqrt{n} }{ q(t) n^2} \\
&  \approx  \left(x(t) - \epsilon \frac{2 x(t) y(t)}{ q(t) } \right) n.
\end{split}
\end{equation*}  
This suggests \( dx/dt = - 2xy/q \).
Finally, we consider \( |Y_{u,v}(i)| \).  First note that
a vertex that is partial with respect to \(\{u,v\}\) has its one open edge closed by \( e_{i+1} \)
with probability nearly \( y(t) \sqrt{n} / ( q(t) n^2 ) \).  The probability that a vertex that
is open with respect to \( \{u,v\} \) becomes partial with respect to \( \{u,v\} \) is 
\( 2/ Q(i) \).  So, we should have
\begin{equation*}
\begin{split}
y( t + \epsilon) \sqrt{n} & \approx \left| Y_{u,v}  \left( i + \epsilon n^{3/2} \right) \right| \\
& \approx
\left| Y_{u,v}(i) \right| 
- \epsilon n^{3/2} \cdot y(t) \sqrt{n} \cdot \frac{ y(t) \sqrt{n}}{ q(t) n^2} + \epsilon n^{3/2} \cdot \frac{ 2 x(t) n}{ q(t) n^2} \\
& \approx
\left( y(t) - \epsilon \frac{ y^2(t)}{q(t)} + \epsilon \frac{ 2x(t)}{q(t)} \right) \sqrt{n},
\end{split}
\end{equation*}
which suggests \( dy/dt = - y^2/q + 2x/q \).
As \( |O_0| = n(n-1)/2 \), \( |X_{u,v}(0)| = n-2 \) for all pairs \( \{u,v\} \) and \( |Y_{u,v}(0)| = 0 \) for all
pairs \(\{u,v\}\), our expected value computations suggest that our random 
variables should follow the trajectory given by
\begin{align}
\label{eq:sys1}
\frac{dq}{dt} & = - y  
& \frac{ dx}{dt} & = - \frac{ 2xy}{q}   & \frac{dy}{dt} & = - \frac{ y^2}{q} + \frac{2x}{q} 
\end{align}
with initial conditions \( q(0) = 1/2  \), \( x(0)=1 \) and \( y(0) =0 \).
The solution to this autonomous system is
\begin{equation}
\label{eq:trajectory}
q(t) = \frac{e^{-4t^2}}{2} \ \ \ \ \ \ \ \ \ \   x(t) = e^{-8t^2} \ \ \ \ \ \ \ \ \ \  y(t) = 4te^{-4t^2}. 
\end{equation}
Note that if \( |O_i| \) indeed follows \( q(t) n^2 \) then the triangle-free process will come to end
at \( t = \Theta(\sqrt{ \log n}) \) ; that is, the process will end with \( \Theta( \sqrt{ \log n} \cdot n^{3/2} ) \) 
edges.  (It was Peter Keevash who pointed out that (\ref{eq:sys1}) has this
tantalizing solution  \cite{pete}.)  Observe that the functions \( q(t), x(t), y(t) \) are the appropriate
values for \(G\) chosen uniformly at random from the collection of graphs with \(n\) vertices and 
\( t n^{3/2} \) edges.

We introduce absolute constants \(\a, \b, \g \) and \( \r \).  The constants \( \a \) and \( \r \) are small,
\( \b \) is a large relative to \( \a \) and \( \g \) is large relative to both \( \a \) and \( \b \).  
(These constant can take values \( \a = \r = 1/32 \), \( \b = 1/2 \) and \( \g = 161 \).  No 
effort is made to optimize the constants, and we do not introduce the actual values in an attempt to 
make the paper easier to read).
Set \[ m = \a \sqrt{\log n} \cdot n^{3/2} . \]
Our first result is that our random variables indeed follow the
trajectory (\ref{eq:trajectory}) up to
\( m \) random edges.  In order to state this concentration result we introduce
error functions that slowly deteriorate as the process evolves (in the language of Wormald \cite{w} 
we employ `the wholistic approach' to the differential equations method).  Define
\begin{equation}
\label{eq:errors}
f_q(t) = \begin{cases} e^{41t^2 + 40t} & \text{ if } t \le 1 \\
\frac{e^{41t^2 + 40t}}{t}  & \text{ if } t > 1
\end{cases}
\ \ \ \ \ \   f_x(t) = e^{ 37t^2 + 40t }  \ \ \  \ \ \
f_y(t) = e^{ 41t^2 + 40 t}, 
\end{equation}
and set
\[ g_q(t) = f_q(t) n^{-1/6}  \ \ \ \ \ \ \ \ \  g_x(t) = f_x(t) n^{-1/6}  \ \ \ \ \ \ \ \ \  g_y(t) = f_y(t) n^{-1/6}.\]
Let \( {\mathcal B}_j \) be the event that there exists there exists a step \( i \le j \) 
such that
\[  \left| Q(i) - q(t) n^2 \right| \ge   g_q(t) n^2  \]
or there exists some pair \( \{u,v \} \in \binom{[n]}{2} \setminus E_i \) such that 
\[ \left| \left| X_{u,v}(i) \right| - x(t) n \right|  \ge g_x(t) n \ \ \text{ or } \ \
\left| \left| Y_{u,v}(i) \right| - y(t) \sqrt{n} \right| \ge g_y(t) \sqrt{n} \ \
 \text{ or }  \ \
\left| Z_{u,v}(i)\right| \ge \log^2 n.
\]
\begin{theorem}
\label{thm:lower} If \(n\) is sufficiently large then
\[ Pr\left( {\mathcal B}_{ \a \sqrt{ \log n} \cdot n^{3/2} } \right) \le e^{ -\log^2 n}.   \] 
\end{theorem}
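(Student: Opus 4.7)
My plan is to apply Wormald's differential equations method in its wholistic form \cite{w}. Let $\tau$ denote the stopping time equal to the smallest $i$ at which any of the four inequalities defining $\mathcal{B}_i$ is violated; then $\mathcal{B}_m = \{\tau \le m\}$, so it suffices to prove $Pr(\tau \le m) \le e^{-\log^2 n}$. This in turn reduces, for each tracked variable $V(i)$ and each pair $\{u,v\}$, to showing that $V(i\wedge\tau)$ departs from its predicted trajectory by more than the prescribed error with probability at most $e^{-\omega(\log n)}$.

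The first step is to sharpen the heuristic one-step expectation computations in the introduction so that they hold on $\{\tau > i\}$ with explicit error bounds in terms of the current $g$'s. For instance,
\begin{equation*}
E[Q(i+1)-Q(i) \mid G_i] = -1 - \frac{1}{Q(i)}\sum_{e\in O_i}|Y_e(i)|,
\end{equation*}
and on $\{\tau > i\}$ the right side equals $-y(t)\sqrt{n}$ up to an additive error of order $g_y(t)\sqrt{n} + (g_q(t)/q(t)) \cdot y(t)\sqrt{n}$. The analogous expansions for $|X_{u,v}|$ and $|Y_{u,v}|$ recover the drifts $-2xy/q$ and $-y^2/q + 2x/q$ of (\ref{eq:sys1}) plus controlled errors. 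Next, for each tracked variable $V(i)$ with predicted value $v(t)n^s$ (where $s\in\{2,1,1/2\}$), I would form the two auxiliary processes $V^\pm(i\wedge\tau)$ obtained by subtracting $v(t)n^s \pm g_v(t)n^s$; the exponents $41$, $37$, $41$ in (\ref{eq:errors}) have been calibrated so that $\partial_t g_v$ strictly dominates the linear combination of $g_q, g_x, g_y$ appearing in the error of the drift, which makes $V^+$ a supermartingale and $V^-$ a submartingale for $i \le \tau$.

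Since the martingale differences of each of these processes are bounded by $O(\sqrt{n}\,\mathrm{polylog}(n))$ on $\{\tau > i\}$ (the dominant contribution coming from closures of edges incident to a selected vertex), Freedman's inequality yields a single-coordinate tail bound of order $\exp(-\Omega(\log^3 n))$ for each event $V^\pm(m\wedge\tau) > 0$. A union bound over the three tracked expected values, the $\binom{n}{2}$ pairs, and the $m$ steps costs only $n^{O(1)}$ and is absorbed with room to spare, yielding the desired $e^{-\log^2 n}$. The remaining inequality, $|Z_{u,v}(i)|\le\log^2 n$, is handled by a separate direct argument: a vertex $w$ enters $Z_{u,v}$ only when a selected edge shares an endpoint with $w$ while $w \in Y_{u,v}$, an event whose probability is tightly controlled on $\{\tau > i\}$, so a first-moment bound plus Chernoff suffices.

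The main obstacle is the drift analysis for $|Y_{u,v}|$: partial vertices both enter $Y_{u,v}$ (from $X_{u,v}$ when one of the two candidate edges is selected) and leave (toward $Z_{u,v}$ when their one remaining open edge is closed), so the error in the drift is a genuinely coupled combination of $g_x$ and $g_y$. The exponents in $f_q, f_x, f_y$ must simultaneously satisfy a coupled system of derivative inequalities, one for each of the six processes $V^\pm$, on the whole interval $0\le t\le\a\sqrt{\log n}$. Verifying this coupled system is the delicate calculational core of the argument, and is the step where a less careful choice of error functions would fail to close the induction.
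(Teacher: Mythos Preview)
Your overall plan matches the paper's: freeze the process at a stopping time $\tau$, show that the centred and error-shifted processes are super/submartingales, and apply a concentration inequality. However, there is a concrete gap in the concentration step for $|Y_{u,v}|$. You assert that all martingale increments are $O(\sqrt{n}\,\mathrm{polylog}(n))$, and for $Q$ and $|X_{u,v}|$ this is indeed the right order. But for $|Y_{u,v}|$ the deviation you must rule out is only of order $g_y(t)\sqrt{n} = \Theta(n^{1/3})$, and any concentration inequality with increment bound $K$ and target deviation $a$ needs at least $K \lesssim a$ to say anything; with $K = \sqrt{n}$ and $a = n^{1/3}$ Freedman's bound $\exp\bigl(-a^2/(2\sigma^2 + \tfrac{2}{3}Ka)\bigr)$ is dominated by the $Ka \approx n^{5/6}$ term in the denominator and yields only $\exp(-n^{-1/6})$, which is useless. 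The fix, and this is exactly what the paper does, is to observe that on $\{\tau > i\}$ the one-step change in $|Y_{u,v}|$ is actually $O(\log^2 n)$: if $e_{i+1}$ is disjoint from $\{u,v\}$ the change is at most $2$, while if $e_{i+1} = \{u,z\}$ then every vertex $w$ removed from $Y_{u,v}$ satisfies $\{w,z\},\{w,v\}\in E_i$, hence $w \in Z_{z,v}(i)$, and $|Z_{z,v}(i)| \le \log^2 n$ precisely because the $Z$-bound is part of the definition of $\tau$. With $K = \log^2 n$ the concentration goes through (the paper in fact obtains $\exp(-n^{1/6}/\mathrm{polylog}(n))$, far stronger than your claimed $\exp(-\Omega(\log^3 n))$, so that figure is also numerically off).

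Two smaller points of comparison. The paper does not run a martingale argument for $Q$ at all: since $Q(j) = \binom{n}{2} - j - \sum_{i<j}|Y_{e_{i+1}}(i)|$ and every summand is already pinned to $\sqrt{n}\,[y(t)\pm g_y(t)]$ on $\overline{\mathcal{B}_{j-1}}$, the bound on $Q$ follows \emph{deterministically} from the bound on the $Y$'s; this is why the error functions are calibrated so that $g_q \le g_y/t$. Second, rather than treating $|Y_{u,v}|$ as a single process, the paper splits its increment into an arrivals piece $U_i\in\{0,1\}$ and a departures piece $V_i \le \log^2 n$, running separate $(\eta,N)$-bounded martingale pairs with $(\tfrac{5}{n},1)$ and $(\tfrac{4}{n},\log^2 n)$ respectively; this is not essential once you have the correct increment bound, but it keeps the constants clean.
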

\noindent  
Note that Theorem~\ref{thm:lower} alone places no upper bound on the
number \(M\) of edges in the graph produced by the triangle-free process.  In order to 
achieve such a bound, we bound the independence number of \( G_{m} \).
\begin{theorem}
\label{thm:upper} If \(n\) is sufficiently large then
\[ Pr \left( \alpha \left( G_{ \a \sqrt{ \log n} \cdot n^{3/2} } \right) > \g \sqrt{n \log n} \mid 
\overline{{\mathcal B}_m} \right) < e^{- n^{1/5} }. \]
\end{theorem}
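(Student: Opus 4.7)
Fix an $s$-element set $S \subseteq [n]$, where $s := \lceil \g \sqrt{n \log n}\, \rceil$, and let $\mathcal{E}_S$ denote the event that $S$ is independent in $G_m$. The plan is to show $\Pr(\mathcal{E}_S \cap \overline{\mathcal{B}_m}) \le \exp\bigl(-(1-o(1))\,\a\g^2\, n^{1/2}(\log n)^{3/2}\bigr)$ for each $S$ and to conclude by a union bound: since $\binom{n}{s} \le \exp\bigl((1+o(1))\,\g\, n^{1/2}(\log n)^{3/2}\bigr)$ and the paper's constants satisfy $\a\g^2 \gg \g$, the resulting total is well below $e^{-n^{1/5}}$.

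\textbf{Tracked variable and its drift.} The natural statistic is
\[ U_S(i) \;:=\; \bigl|\,O_i \cap \tbinom{S}{2}\,\bigr|, \]
the number of open pairs wholly inside $S$ after step $i$. Initially $U_S(0)=\tbinom{s}{2}$, and on $\mathcal{E}_S$ pairs in $\tbinom{S}{2}$ only transition from open to closed. A fixed open pair $\{u,v\}\subseteq S$ becomes an edge with probability $1/|O_i|$ and becomes closed with probability $|Y_{u,v}(i)|/|O_i|$. Inserting the estimates $|Y_{u,v}(i)|=(4te^{-4t^2}+O(g_y(t)))\sqrt{n}$ and $|O_i|=(\tfrac12 e^{-4t^2}+O(g_q(t)))n^2$ guaranteed by $\overline{\mathcal{B}_m}$, and summing over pairs in $O_i\cap\tbinom{S}{2}$, gives
\[ \mathbb{E}\bigl[U_S(i)-U_S(i+1)\,\big|\,\mathcal{F}_i\bigr] \;=\; (1+o(1))\,\frac{8t}{n^{3/2}}\,U_S(i), \]
matching the ODE $u'=-8tu$ with solution $u(t)=\tbinom{s}{2}\,e^{-4t^2}$. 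The crucial cancellation is that, along this trajectory,
\[ \frac{U_S(i)}{|O_i|} \;\approx\; \frac{\tbinom{s}{2}\,e^{-4t^2}}{\tfrac12 e^{-4t^2}\, n^2} \;=\; \frac{2\tbinom{s}{2}}{n^2}, \]
so the per-step probability that $e_{i+1}\in\tbinom{S}{2}$ is to leading order constant in $t$.

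\textbf{Concentration and finish.} To make the drift computation rigorous, introduce an error function $f_U(t)$ in the spirit of (\ref{eq:errors}) and set $g_U(t)=f_U(t)\,n^{-\d}$ for a small $\d>0$. Verify that the stopped processes
\[ \pm\Bigl(U_S(i) - \tbinom{s}{2}\bigl(e^{-4t^2}\pm g_U(t)\bigr)\Bigr) \]
are supermartingales up to the first step at which either $\mathcal{B}_i$ occurs, $\mathcal{E}_S$ fails, or $U_S$ leaves the tube. An Azuma bound using the single-step Lipschitz estimate $|U_S(i+1)-U_S(i)| = O(s)$ then controls tube-escape with probability $e^{-n^{\Omega(1)}}$. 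On the complementary event,
\[ \Pr\bigl(\mathcal{E}_S \,\big|\, \overline{\mathcal{B}_m},\, U_S \text{ in tube}\bigr) \;\le\; \prod_{i=0}^{m-1}\Bigl(1-\tfrac{U_S(i)}{|O_i|}\Bigr) \;\le\; \exp\Bigl(-(2-o(1))\tfrac{\binom{s}{2}}{n^2}\,m\Bigr) \;=\; \exp\bigl(-(1-o(1))\,\a\g^2\,n^{1/2}(\log n)^{3/2}\bigr). \]
Adding the tube-escape probability, summing over the $\binom{n}{s}$ candidate sets $S$, and using $\a\g^2>\g$ (with a substantial margin for the chosen values) completes the proof.

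\textbf{Main obstacle.} The substantive work is the concentration of $U_S$ with failure probability $e^{-n^{\Omega(1)}}$, strong enough to survive the $\binom{n}{s}$-fold union bound. This requires choosing $f_U$ so that the drift supplied by the moving barriers $\pm\tbinom{s}{2}\,g_U'(t)$ dominates both the one-step variance of $U_S$ (of order $s^2$ per step) and the cumulative slack $\tbinom{s}{2}\cdot O(g_y(t)+g_q(t))$ that the error bars of $\overline{\mathcal{B}_m}$ inject into the one-step expected change. Once the supermartingale apparatus is in place, the cancellation $U_S/|O_i|\approx 2\tbinom{s}{2}/n^2$ does the rest.
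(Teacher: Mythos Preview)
Your overall strategy matches the paper's: track the number of open pairs inside a candidate independent set, show it stays near $\binom{s}{2}e^{-4t^2}$, deduce that the per-step chance of an edge landing in $S$ is essentially constant, and finish with the product bound and a union over all $S$. The difficulty, as you correctly identify, is the concentration step --- and this is where your proposal has a genuine gap.

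An Azuma--Hoeffding bound with Lipschitz constant $N = O(s)$ is too weak to survive the union bound. The one-step change $U_S(i)-U_S(i+1)$ really can be as large as $\max_v |N_i(v)\cap S|$, which is $\Theta(s)$ in the worst case (nothing in $\overline{\mathcal{B}_m}$ forbids a single vertex from having most of its neighborhood inside $S$). With $\eta = O(s^2/n^{3/2})$ (the deterministic drift when $e_{i+1}$ misses $S$), $m = \a n^{3/2}\sqrt{\log n}$, and even the most generous deviation $a = \Theta(s^2)$, Lemmas~\ref{lem:sub}--\ref{lem:super} yield
\[ Pr(\text{tube escape}) \;\le\; \exp\left(-\frac{a^2}{3\eta m N}\right) \;=\; \exp\left(-\Theta\!\left(\frac{s\,n^{3/2}}{m}\right)\right) \;=\; \exp\bigl(-\Theta(\sqrt{n})\bigr), \]
while $\binom{n}{s} = \exp\bigl(\Theta(\sqrt{n}\,(\log n)^{3/2})\bigr)$. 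The polylogarithmic factor kills the union bound; taking $a = s^2 n^{-\delta}$ as you suggest only makes the exponent smaller. So ``$e^{-n^{\Omega(1)}}$'' is true but not strong enough.

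The paper handles this by \emph{not} tracking $U_S$ directly. For disjoint $A,B\subset K$ of size $k$ it introduces a relaxed count $W_{A,B}$: a pair in $A\times B$ is declared closed \emph{with respect to $A,B$} only when the responsible common neighbor $x$ has at most $k/n^{\rho}$ neighbors in each of $A$ and $B$. This caps the one-step decrement at $kn^{-\rho}$ rather than $k$, buying a factor of $n^{\rho/3}$ in the Azuma exponent --- enough to beat the polylog. Of course $W_{A,B}$ then overcounts genuinely open pairs; the repair uses the co-degree bound $|Z_{u,v}|\le\log^2 n$ from $\overline{\mathcal{B}_m}$ (together with a separate max-degree bound, Claim~\ref{cl:degrees}) to show that at most $O(n^{\rho})$ vertices can have large degree into $K$, and then chooses $A,B\subseteq K$ cleverly so that those few vertices contribute negligibly to the gap between $|W_{A,B}|$ and the true open-pair count. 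This truncation-plus-deterministic-repair is the missing idea in your plan.
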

\noindent
Since the neighborhood of each vertex in the triangle-free process is an independent set, it 
follows immediately from Theorem~\ref{thm:upper} that the maximum degree in \( G_M \) is
at most \( \g \sqrt{n \log n} \) a.a.s.  Thus, we have proved Theorem~\ref{thm:show1}.  

The remainder of the paper is organized as follows.  In the next section we establish some technical
preliminaries.  Theorems~\ref{thm:lower}~and~\ref{thm:upper} are
then proved in Sections~\ref{sec:lower}~and~\ref{sec:upper}, respectively.  The proof of Theorem~\ref{thm:extra}
is given in Section~\ref{sec:extra}.

\section{Preliminaries}

Our probability space is the space defined naturally by the triangle-free process.  Let \( \Omega = \Omega_n \) be the 
set of all maximal sequences in \( \binom{[n]}{2}^* \) with distinct entries and the property that
each initial sequence gives a triangle-free 
graph on vertex set \([n] \).  We stress that our measure is not uniform: it is the measure given by 
the uniform random
choice at each step.  We always work with the natural filtration \( {\mathcal F}_0 
\subseteq {\mathcal F}_1 \subseteq \dotsm \)  
given by the process.  Two elements \(x,y\) of \( \Omega \) are in the same part of the partition 
that generates \( {\mathcal F}_j \) iff the first \(j\) entries of \(x\) and \(y\) agree.  We use the
symbol \( \omega_j\) to denote one of the parts in this partition (i.e. \(\omega_j\) denotes a particular
history of the process through \(j\) steps); in particular, if \( \omega \in \Omega \) then \( \omega_j \) is the
part of the partition that defines \( {\mathcal F}_j \) that contains \( \omega \).

For the purpose of notational convenience we use the symbol `\(\pm\)' in two ways: in interval
arithmetic and to define pairs of random variables.  The distinction between the two should be clear from
context.  The degree of a vertex \(v\) in \( G_i \) is denoted \( d_i(v) \) and the neighborhood of \(v\)
in \( G_i\) is \(N_i(v) \).

Our main tool for establishing concentration 
is the following version of the Azuma-Hoeffding inequality.
Let \( \eta, N > 0 \) be constants.  We say that a sequence of random variables
\( A_0, A_1, \dots \) is {\bf \( (\eta, N) \)-bounded} if
\[ A_i - \eta \le A_{i+1} \le A_i + N \ \  \text{ for all } i. \]
\begin{lemma}
\label{lem:sub}
Suppose \( \eta \le N /2 \) and \( a < \eta m \).
If \( 0 \equiv A_0, A_1, \dots \) is an \( (\eta,N) \)-bounded submartingale then
\[ Pr[ A_m \le - a ] \le e^{ - \frac{ a^2}{ 3 \eta m  N}}. \]
\end{lemma}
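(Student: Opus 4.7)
The plan is to apply the exponential moment (Chernoff-style) argument, tailored to this one-sided, asymmetric submartingale setting. First I would start from Markov's inequality in the form
\[
Pr[A_m \le -a] \;\le\; e^{-\lambda a}\,\mathbb{E}\bigl[e^{-\lambda A_m}\bigr]
\]
for any $\lambda > 0$, which reduces the task to bounding the conditional moment generating function of each increment $X_i := A_i - A_{i-1}$. By assumption $X_i \in [-\eta, N]$ and $\mathbb{E}[X_i \mid \mathcal{F}_{i-1}] \ge 0$, so convexity of $e^{-\lambda x}$ on $[-\eta, N]$ gives the chord bound
\[
e^{-\lambda x} \;\le\; \frac{N - x}{N+\eta}\, e^{\lambda \eta} + \frac{x + \eta}{N+\eta}\, e^{-\lambda N}.
\]
After taking conditional expectations, the right-hand side is a decreasing linear function of $\mathbb{E}[X_i \mid \mathcal{F}_{i-1}]$ (since $e^{\lambda\eta} > e^{-\lambda N}$), so the submartingale hypothesis forces the worst case at $\mathbb{E}[X_i \mid \mathcal{F}_{i-1}] = 0$, giving
\[
\mathbb{E}\bigl[e^{-\lambda X_i} \mid \mathcal{F}_{i-1}\bigr] \;\le\; \frac{N e^{\lambda \eta} + \eta e^{-\lambda N}}{N + \eta}.
\]

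Next I would restrict $\lambda$ to the range $\lambda \le 1/N$, which together with $\eta \le N/2$ forces $\lambda \eta \le 1/2$. On this range the Taylor-style inequalities $e^{-y} \le 1 - y + y^2/2$ for $y \ge 0$ and $e^{y} \le 1 + y + y^2$ for $y \in [0,1]$ apply, and after substitution into the displayed bound the linear terms cancel, leaving
\[
\frac{N e^{\lambda\eta} + \eta e^{-\lambda N}}{N+\eta} \;\le\; 1 + \lambda^2 \eta N \cdot \frac{N/2 + \eta}{N + \eta} \;\le\; \exp\!\left(\tfrac{2}{3}\lambda^2 \eta N\right),
\]
where the factor $2/3$ is the maximum of $(N/2+\eta)/(N+\eta)$ over $\eta \in [0, N/2]$, attained at $\eta = N/2$. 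Iterating through the filtration yields $\mathbb{E}\bigl[e^{-\lambda A_m}\bigr] \le \exp\bigl(\tfrac{2}{3} m \lambda^2 \eta N\bigr)$, so
\[
Pr[A_m \le -a] \;\le\; \exp\!\left(-\lambda a + \tfrac{2}{3} m \lambda^2 \eta N\right).
\]
Optimizing in $\lambda$ gives $\lambda = 3a/(4 m \eta N)$ and a bound of $\exp\bigl(-3a^2/(8 m \eta N)\bigr) \le \exp\bigl(-a^2/(3 m \eta N)\bigr)$. The constraint $\lambda \le 1/N$ reduces to $3a \le 4\eta m$, which is implied by the hypothesis $a < \eta m$.

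The main obstacle is quantitative rather than structural: a naive symmetric Hoeffding bound on the conditional MGF would only produce an exponent of order $-a^2/(m N^2)$, which is far too weak when $\eta \ll N$. The key is to exploit both the asymmetry $\eta \le N/2$ and the submartingale bias on $\mathbb{E}[X_i \mid \mathcal{F}_{i-1}]$ to replace $N^2$ by $\eta N$ in the denominator. The two hypotheses of the lemma enter exactly through this refinement: $\eta \le N/2$ justifies the Taylor expansion of $e^{\lambda \eta}$ and pins down the constant $2/3$, while $a < \eta m$ is precisely what is needed for the optimizer $\lambda$ to lie in the admissible range $\lambda \le 1/N$.
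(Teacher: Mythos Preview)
Your argument is correct. It is also essentially the same underlying method as the paper's---the Chernoff/exponential-moment technique---but the packaging differs. The paper rescales to $X_i = -A_i/(\eta+N)$ and applies Hoeffding's theorem (Theorem~\ref{thm:hoe}) as a black box, obtaining the bound $\exp\{g(-a/(m\eta),\overline{\mu})\,m\}$ with $\mu = N/(\eta+N)$; it then Taylor-expands the function $g$ about $0$ using $g(0)=g'(0)=0$ and $g''(x)\le -v$ on $(-1,0]$ to get $g(x)\le -vx^2/2$, which yields $\exp\{-a^2/(2m\eta(N+\eta))\}\le\exp\{-a^2/(3m\eta N)\}$. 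You instead bound the conditional moment generating function of each increment directly via the chord inequality, carry explicit Taylor bounds on $e^{\lambda\eta}$ and $e^{-\lambda N}$, and optimize over $\lambda$. Your route is self-contained and makes the roles of the two hypotheses ($\eta\le N/2$ for the Taylor range and the constant $2/3$, $a<\eta m$ for the admissibility of the optimizer) very explicit; the paper's route is shorter once Hoeffding's inequality is granted and avoids having to track the $\lambda\le 1/N$ constraint by hand. Either way one lands on an exponent of order $-a^2/(m\eta N)$, which is the point.
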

\begin{lemma}
\label{lem:super}
Suppose \( \eta \le N /10 \) and \( a < m \eta\).
If \( 0 \equiv A_0, A_1, \dots \) is an \( (\eta,N) \)-bounded supermartingale then
\[ Pr[ A_m \ge a ] \le e^{ - \frac{ a^2}{ 3 \eta m  N}}. \]
\end{lemma}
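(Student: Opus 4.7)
The plan is to apply the standard exponential-moment (Chernoff) argument, adapted to the asymmetric increment bound \( A_i - A_{i-1} \in [-\eta, N] \). The key single-step estimate I would establish is: if \( X \) satisfies \( -\eta \le X \le N \) and \( E[X] \le 0 \), then for every \( \lambda \in (0, 1/N] \),
\[
E\bigl[e^{\lambda X}\bigr] \le \exp\bigl((e-2)\lambda^2\eta N\bigr).
\]
To prove it, I would start from the chord bound \( e^{\lambda x} \le \tfrac{N-x}{N+\eta} e^{-\lambda\eta} + \tfrac{x+\eta}{N+\eta} e^{\lambda N} \), valid for \( x \in [-\eta, N] \) by convexity of \( \exp \); take expectations; and observe that the resulting right-hand side is monotone increasing in \( E[X] \) (because \( e^{\lambda N} > e^{-\lambda\eta} \)), hence is maximized when \( E[X] = 0 \). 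A careful Taylor expansion of the two exponentials in the resulting two-point bound---using \( e^{-s} \le 1 - s + s^2/2 \) for \( s \ge 0 \) and \( e^u \le 1 + u + (e-2) u^2 \) for \( u \in [0, 1] \)---makes the first-order terms cancel exactly and leaves a second-order remainder of at most \( (e-2) \lambda^2 \eta N \).

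With this estimate in hand the rest is routine. Setting \( \Delta_i := A_i - A_{i-1} \), the hypotheses give \( \Delta_i \in [-\eta, N] \) and \( E[\Delta_i \mid {\mathcal F}_{i-1}] \le 0 \). Applying the estimate conditionally on \( {\mathcal F}_{i-1} \) and iterating via the tower property gives \( E[e^{\lambda A_m}] \le \exp((e-2) m \lambda^2 \eta N) \) for every \( \lambda \in (0, 1/N] \). Markov's inequality then yields \( Pr[A_m \ge a] \le \exp(-\lambda a + (e-2) m \lambda^2 \eta N) \), and optimizing with \( \lambda = a/(2(e-2) m \eta N) \) gives exponent \( -a^2/(4(e-2) m\eta N) < -a^2/(3 m\eta N) \). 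The hypothesis \( a < m\eta \) forces \( \lambda N < 1/(2(e-2)) < 1 \), so the single-step estimate applies at this choice of \( \lambda \), and the claimed bound follows.

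There is no real conceptual obstacle---this is the textbook Chernoff/Azuma calculation, the wrinkle being that the supermartingale hypothesis lets us exploit the asymmetric increment bound to get a factor of \( \eta N \) (rather than \( N^2 \)) in the concentration exponent. The only point requiring a little care is the choice of Taylor-remainder constant: the value \( e - 2 \approx 0.718 \) is just small enough that \( 4(e-2) < 3 \), which is exactly what recovers the denominator \( 3 m \eta N \) after optimization. The stronger hypothesis \( \eta \le N/10 \) provides considerable extra slack that the above approach does not even use. Finally, Lemma~\ref{lem:sub} admits an essentially identical proof with the roles of the upward and downward bounds swapped.
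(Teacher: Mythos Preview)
Your proof is correct. The single-step MGF bound checks out: the chord bound plus the two Taylor estimates you cite (both easily verified) give
\[
E[e^{\lambda X}] \le 1 + \frac{\lambda^2\eta N}{N+\eta}\Bigl(\tfrac{\eta}{2} + (e-2)N\Bigr) \le 1 + (e-2)\lambda^2\eta N,
\]
the last step because \((\eta/2 + (e-2)N)/(N+\eta)\) is a convex combination of \(1/2\) and \(e-2\). The optimization and the check that \(\lambda N < 1\) are fine.

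The paper takes a different route: it rescales to \(X_i = A_i/(\eta+N)\), invokes Hoeffding's classical two-point inequality (stated separately as a theorem), and then bounds the resulting expression by estimating the second derivative of the function \(g(x,v) = (v+xv)\log\frac{v}{v+xv} + (\bar v - xv)\log\frac{\bar v}{\bar v - xv}\), arriving at \(g(x)\le -\tfrac{11}{30}vx^2\) for \(0\le x\le 1\). That yields exponent \(-\tfrac{11}{30}\,a^2/(m\eta(N+\eta))\), and the hypothesis \(\eta\le N/10\) is then used to absorb the factor \(N+\eta\) into \(\tfrac{30}{11}\cdot N\cdot\tfrac{11}{10} = 3N\). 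Your direct Chernoff argument is more self-contained (no appeal to an external Hoeffding statement) and, as you observe, does not actually need \(\eta\le N/10\); the paper's route genuinely consumes that hypothesis in the last step. Either approach is perfectly standard.
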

\noindent
As the author
failed to find a reference for these particular inequalities in the 
literature, proofs are
given at the end of the paper, in Section~\ref{sec:ah}.  We often work with pairs 
\( A_0^\pm, A_1^\pm, \dots \) where \(A_0^+, A_1^+, \dots \) is an \( (\eta, N) \)-bounded
submartingale and  \(A_0^-, A_1^-, \dots \) is an \( (\eta, N) \)-bounded 
supermartingale.  We will refer to such a pair of sequences of random variables as
an {\bf \((\eta,N)\)-bounded martingale pair}.

\section{Trajectory}
\label{sec:lower}

%
%

Here we prove Theorem~\ref{thm:lower}, which establishes tight concentration of the random variables \( |O_i| \), 
\( |X_{u,v}(i)| \) and \( |Y_{u,v}(i)| \) around the trajectory given in (\ref{eq:trajectory}) and 
bounds \( | Z_{u,v}(i) | \).

Recall
\( t = t(i) = i/ n^{3/2} \) and \( m = \mu \sqrt{\log n} \cdot n^{3/2} \) and 
\begin{gather*}
g_q(t) = \begin{cases} e^{41t^2 + 40t} n^{-1/6} & \text{ if } t \le 1 \\
\frac{e^{41t^2 + 40t}}{t}  n^{-1/6}  & \text{ if } t > 1
\end{cases}
\ \ \ \ \ \   g_x(t) = e^{ 37t^2 + 40t } n^{-1/6} \ \ \  \ \ \
g_y(t) = e^{ 41t^2 + 40 t} n^{-1/6} . 
\end{gather*}
Note that 
\begin{equation}
\label{eq:relates}
g_q \le \frac{ g_y}{t} \ \ \ \ \ \ \ \text{and } \ \ \ \ \ \ \ g_x = e^{-4t^2} g_y.
\end{equation}

We define events 
\( \cx \), \( \cy \) and \({\mathcal Z} \).  For \( \omega \in {\mathcal B}_m \) let 
\(\ell\) be the smallest index such that \( \omega \in {\mathcal B}_\ell \) but \( \omega 
\not\in {\mathcal B}_{\ell-1} \); in other words, the random variable 
\( \ell \) is the first time that one of our tracked random variables
is outside the allowable range.  We define \( {\mathcal X} \) to be the set of \( \omega \in {\mathcal B}_m \)
such that there exists a pair \( \{u,v\} \) such that \( \{u,v\} \not\in E_\ell \) and
\[ \left| X_{u,v}(\ell) \right| \not\in n \left[ x( t(\ell)) \pm g_x( t(\ell)) \right]. \]
So, an atom \( \omega \in \cb_m \) is in \( {\mathcal X} \) if there is some pair of vertices 
\( \{u,v\} \)
such that the number of open 
vertices with respect to \( \{u,v\} \) is a reason we place \( \omega \in {\mathcal B}_\ell \).  
Define \( {\mathcal Y} \) and \( {\mathcal Z} \) analogously.   We prove Theorem~\ref{thm:lower} by showing
\begin{equation}
\label{eq:noq}
 {\mathcal B}_m = \cx \cup \cy \cup \cz 
\end{equation}
and then bounding the probabilities of \( \cx\), \( \cy\) and \( \cz\).  
In the next subsection we show that if \( \left|Y_{u,v}(j)\right| \) is in range for all \( j \le i \) and all pairs
\( \{u,v\} \) then \( |O_i| \) is in range, thereby establishing (\ref{eq:noq}).
In the following three 
subsections we establish upper bounds on the probabilities of the events \( \cx, \cy\) and  \(\cz \),  
respectively.

\subsection{Open edges}

Here we simply take advantage of the strict control we enforce on the number of
partial vertices at each pair; we do not invoke any concentration
inequalities in this subsection.  Note that if we have \( e_{i+1} = \{u,v \} \) then the number of edges closed when we
add \( e_{i+1}\)
is simply equal to \( |Y_{u,v}(i)| \), the number of partial vertices at 
\( \{u,v\} \).  Therefore, assuming \( \omega \not\in \cb_{j-1} \), we 
have
\begin{equation*}
\begin{split}
| O_j | & = \frac{n(n-1)}{2} - j - \sum_{i=0}^{j-1} \left| Y_{ e_{i+1}} (i) \right| \\
& \in \frac{ n^2}{2} - \frac{n}{2} - j - \sqrt{n} \left[ \sum_{i=0}^{j-1} y( t ) \pm g_y( t ) \right] \\
& \subseteq \frac{ n^2}{2} - n^2 \int_0^{ t(j) } 4\t e^{-4\t^2} d\tau \pm n^{11/6} \int_0^{t(j)} e^{41 \t^2 + 40\t} d\tau \pm n^{5/3} \\
& \subseteq n^2 \left[ q(t(j)) \pm  g_q(t(j)) \right]. 
\end{split}
\end{equation*}
Note that this establishes (\ref{eq:noq}).

\subsection{Open vertices}

Consider a fixed \( \{u,v\} \in \binom{[n]}{2} \).  We write 
\[ \left| X_{u,v}(j) \right| = n -2 - \sum_{i=1}^j A_i \]
where \( A_i \) is the number of open vertices at \( \{u,v\} \) that 
are eliminated when \( e_i \) is added to the process.  Define \( A_i^+ \) and \( A_i^- \) by
\begin{gather*}
A^{\pm}_i = 
\begin{cases}
A_i +  \frac{1}{ \sqrt{n}} \left[ - \frac{ 2 x(t) y(t)} {q (t)} \pm (17 t + 39) g_x(t)  \right] & \text{ if } \omega 
\not\in {\mathcal B}_{i-1} \text{ and } \{u,v\} \not\in E_{i} \\
0 & \text{ if } \omega \in {\mathcal B}_{i-1} \text{ or } \{u,v\} \in E_{i}
\end{cases} \\
B^{\pm}_j = \sum_{i=1}^j A^{\pm}_i .
\end{gather*}
Note that if \( \omega \not\in \cb_{j-1} \) and \( \{u,v\} \not\in E_j \) then we have
\begin{equation*}
\begin{split}
\left| X_{u,v}(j) \right| & = n -2 - \sum_{i=1}^j A_i^+ - \sum_{i=1}^j 
\left[ \frac{ 2 x\left( t \right) y( t )} {q (t)} - (17t + 39) g_x(t) \right] \frac{1}{ \sqrt{n}} \\
& \le n - B_j^+  -  n \int_{0}^{t(j)}  \frac{ 2 x( \tau) y( \tau)}{ q( \tau)} d \tau + n^{5/6} \int_{0}^{t(j)} (17\tau+39) 
e^{37 \tau^2 + 40\tau} d \tau \\
& \le n x( t(j)) + n^{5/6} \left(e^{37 t^2(j) + 40 t(j)} - 1 \right) - B_j^+ \\
& = n \left[ x(t(j)) +  g_x( t(j)) \right]  - \left( B_j^+ + n^{5/6} \right).
\end{split}
\end{equation*}
Therefore, the event \( |X_{u,v}(j)| > n \left[ x(t(j)) + g_x( t(j)) \right] \) is contained in the event \( B_j^+ < - n^{5/6} \).  
Similarly, the event \( |X_{u,v}(j)| < n \left[ x(t(j)) - g_x(t(j)) \right] \) is contained in the event \( B_j^- > n^{5/6} \).
We bound the probabilities of these events by application of the martingale inequalities.
\begin{claim}
\( B^\pm_0, B^\pm_1, \dots \) is a \( ( \frac{4}{\sqrt{n}}, \sqrt{n}) \)-bounded martingale pair.  
\end{claim}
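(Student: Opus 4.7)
The plan is to split the claim into two verifications: first, checking that each increment $A_i^\pm$ lies in $[-\tfrac{4}{\sqrt n}, \sqrt n]$; and second, establishing $E[A_i^+ \mid \mathcal{F}_{i-1}] \ge 0$ and $E[A_i^- \mid \mathcal{F}_{i-1}] \le 0$. Both are trivial on $\{\omega \in \mathcal{B}_{i-1}\} \cup \{\{u,v\} \in E_i\}$, where $A_i^\pm \equiv 0$ by definition, so I will restrict throughout to the complementary event. In that regime $A_i \ge 0$ deterministically (vertices only leave $X_{u,v}$), and the drift $\frac{2xy}{q} = 16t\, e^{-8t^2}$ achieves its global maximum $4 e^{-1/2} < 4$ at $t = \tfrac14$, so the subtracted drift contributes at most $4/\sqrt n$ in absolute value, yielding the lower bound $A_i^\pm \ge -\tfrac{4}{\sqrt n}$. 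For the upper bound I will enumerate: writing $e_i = \{a,b\}$, a vertex $w \in X_{u,v}(i-1)$ leaves $X_{u,v}$ only when $e_i$ itself is incident to $w$ and one of $u, v$, or when $e_i$ closes $\{u,w\}$ or $\{v,w\}$ by completing a triangle. A short case analysis yields $A_i \le d_{i-1}(a) + d_{i-1}(b) + O(1)$, which sits comfortably below $\sqrt n$ under the degree control inherited from $\omega \notin \mathcal{B}_{i-1}$.

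Next I would compute the one-step conditional expectation. A fixed $w \in X_{u,v}(i-1)$ leaves $X_{u,v}$ when $e_i$ closes $\{u,w\}$ (probability $|Y_{u,w}(i-1)|/|O_{i-1}|$) or $\{v,w\}$ (probability $|Y_{v,w}(i-1)|/|O_{i-1}|$), less a negligible double-counted overlap, plus the case $e_i \in \{\{u,w\},\{v,w\}\}$ (probability $2/|O_{i-1}|$). Summing,
\[ E[A_i \mid \mathcal{F}_{i-1}] = \frac{\sum_{w \in X_{u,v}(i-1)}\bigl(|Y_{u,w}(i-1)| + |Y_{v,w}(i-1)|\bigr)}{|O_{i-1}|} + O\!\left(\frac{|X_{u,v}(i-1)|}{|O_{i-1}|}\right). \]
Under $\omega \notin \mathcal{B}_{i-1}$ I substitute $|Y_{u,w}|, |Y_{v,w}| = \sqrt n (y \pm g_y)$, $|X_{u,v}| = n(x \pm g_x)$, $|O_{i-1}| = n^2(q \pm g_q)$. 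A first-order expansion of $2xy/q$ isolates the main term $\frac{2xy}{q\sqrt n}$ and bounds the perturbation by $\frac{2x g_y + 2 y g_x}{q} + \frac{2xy g_q}{q^2}$; invoking the relations $g_q \le g_y/t$ and $g_x = e^{-4t^2} g_y$ from (\ref{eq:relates}) converts each summand into a constant multiple of $g_x$, and the total comes out below $(17t+39) g_x/\sqrt n$ after absorbing the $O(1/n^{3/2})$ remainder. This is exactly the inequality needed to conclude $E[A_i^+ \mid \mathcal{F}_{i-1}] \ge 0$ and $E[A_i^- \mid \mathcal{F}_{i-1}] \le 0$.

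The main obstacle is fitting the constants precisely: the coefficient $17t+39$ must survive integration against the slack in the error function $f_x(t) = e^{37t^2+40t}$, whose logarithmic derivative is $74t + 40$. Once the first-order expansion above is carried out, this collapses to a short verification, since $17\tau + 39 \le 74\tau + 40$ for all $\tau \ge 0$ and hence $\int_0^t(17\tau+39)e^{37\tau^2 + 40\tau}\, d\tau \le e^{37 t^2 + 40 t} - 1$, which is precisely what the chain of inequalities immediately preceding the claim in the excerpt demands.
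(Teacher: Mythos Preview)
Your martingale computation is essentially the paper's, but the upper bound on the increments has a genuine gap. You write that the case analysis gives $A_i \le d_{i-1}(a) + d_{i-1}(b) + O(1)$ and that this ``sits comfortably below $\sqrt n$ under the degree control inherited from $\omega \notin \mathcal B_{i-1}$.'' There is no such degree control: the event $\mathcal B_{i-1}$ constrains only $Q$, $|X_{u,v}|$, $|Y_{u,v}|$ and $|Z_{u,v}|$ for non-edges $\{u,v\}$, and none of these yields a pointwise bound on $d_{i-1}(a)$. The degree bound in the paper (Claim~\ref{cl:degrees}) is proved only in Section~\ref{sec:upper}, \emph{after} and \emph{using} Theorem~\ref{thm:lower}, so invoking it here is circular.

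The paper closes the gap differently. When $e_i = \{a,b\}$ is added, the pairs that become closed are in bijection with $Y_{a,b}(i-1)$: each partial vertex $z$ at $\{a,b\}$ contributes exactly the one open pair among $\{a,z\},\{b,z\}$. Distinct vertices $w$ leaving $X_{u,v}$ correspond to distinct pairs $\{u,w\}$ or $\{v,w\}$ that are either newly closed or equal to $e_i$ itself, so $A_i \le |Y_{a,b}(i-1)| + 1$. Since $\{a,b\}$ was open (hence not an edge) at step $i-1$, the bound $|Y_{a,b}(i-1)| \le (y+g_y)\sqrt n$ \emph{is} supplied by $\omega \notin \mathcal B_{i-1}$, and $y + g_y < 1$ on the relevant range. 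This gives $A_i \le \sqrt n$ without any appeal to degrees.

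A minor point: in your expectation formula the ``negligible double-counted overlap'' is the $-|Z_{u,v}(i-1)|$ correction, which contributes a term of order $|X_{u,v}|\,|Z_{u,v}|/|O_{i-1}| \sim (\log^2 n)/n$ rather than $|X_{u,v}|/|O_{i-1}| \sim 1/n$ as your $O(\cdot)$ suggests. This is harmless for the final inequality, since it is still dominated by $g_x/\sqrt n$, but the paper tracks it explicitly.
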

\begin{proof}
We begin with the martingale condition.  Of course, we can restrict our attention 
to \( \omega_i \) such that \( \omega_i \not\subseteq \cb_{i}\) and \( \{u,v\} \not\in E_i \).
Consider a vertex \( w \in X_{u,v}(i) \).  Note that
\( w \not\in X_{u,v}(i+1)\) if 
\( e_{i+1} \in \left\{ \{u,w\}, \{v,w\} \right\} \), \( e_{i+1} \) connects
\( \{u,w\} \) to one of the vertices that is partial at 
\( \{u,w\} \) or \( e_{i+1} \) 
connects \( \{v,w\} \) to one of the vertices that is partial at \( \{v,w\} \).  Note that (as we assume 
\( \{ u,v \} \not\in E_i \)) the edge \( e_{i+1} \) plays 2 of these roles if and only if
\( e_{i+1} = \{z,w\} \) where \( z \in Z_{u,v}(i) \).  It follows that we have
\[ Pr \left( w \not\in X_{u,v}(i+1) \right) = \frac{2 + | Y_{u,w}(i)| + | Y_{v,w}(i)| - |Z_{u,v}(i)|}{| O_i|}, \]
and therefore
\[ E[ A_{i+1} \mid \fee_i ] = \frac{1}{|O_i|} \left[ \sum_{w \in X_{u,v}(i) }  2 + | Y_{u,w}(i)| + | Y_{v,w}(i)| - |Z_{u,v}(i)| \right]. \]
As we restrict our attention to \( \omega_i \not\subseteq \cb_i \), we have
\begin{equation*}
\begin{split}
E[ A_{i+1} \mid \fee_i ] & \in  
\frac{ 2 n^{3/2} ( x \pm g_x )( y \pm g_y ) }{  n^2 ( q \pm g_q ) }  
+ \left( -\frac{ n (x+g_x) \log^2 n }{ n^2 (q - g_q)}, \frac{ 2 n (x + g_x) }{ n^2 (q - g_q)} \right) 
\\
& \subseteq \frac{1}{\sqrt{n}} \left[ \frac{2 x y }{q } \pm \left(  \frac{ 2g_yx  
+ 2g_xy  + 2g_xg_y }{ q - g_q } + \frac{ 2xy g_q }{ q( q - g_q )} \right) \right] + \left( - \frac{ \log^2 n}{n} \left[ \frac{ 4 x}{q} \right], 
\frac{ 1}{n} \frac{ 4x}{q} \right) \\
& \subseteq \frac{1}{\sqrt{n}} \left[ \frac{2 x y }{q }  \pm \left( 
5 e^{-4t^2} g_y  +  17t g_x  + 5 g_xg_y e^{4t^2} + 33 t e^{-4t^2} g_q \right) \right] 
\pm \frac{ \log^2n }{n} 4 e^{-4t^2} \\
& \subseteq \frac{1}{\sqrt{n}} \left[ \frac{2 x y }{q }  \pm \left( 17t g_x + 39 g_x \right) \right].
\end{split}
\end{equation*}
(Note that we apply (\ref{eq:relates}).)  This establishes the martingale condition.

Now we turn to the bounds on \( A^\pm_{i+1} \).  We use the simple fact that 
the set of edges closed when we add \( e_{i+1} \) is determined by \( Y_{e_{i+1}}(i) \); one edge in each partial triangle in
\( Y_{e_{i+1}}(i) \) is closed.  Therefore, the maximum value of \(A_i\) is bounded above by 
\( (y(t) + g_y(t))\sqrt{n} \), which is at most \( \sqrt{n} \).  Of course \( A_i^\pm \) takes its smallest
value when \( A_i = 0 \), and in this case we have \( A^\pm_i > - 4 / \sqrt{n} \) as \( 2 xy/q \le 4 /\sqrt{e} \).
\end{proof}

\noindent
Applying Lemmas~\ref{lem:sub}~and~~\ref{lem:super} we have
\begin{equation}
\label{eq:steps}
Pr( B^+_m < - n^{5/6}), Pr( B^-_m  > n^{5/6} ) \le e^{ - \frac{n^{5/3}}{12m}  }. 
\end{equation}

We claim that \( \cx \) is contained in the union, taken over all pairs \( \{u,v\} \), of the 
events given in (\ref{eq:steps}).  Indeed, if \( \omega \in \cx \) on account of the the pair
\( \{u,v\} \) at step \( \ell \) then either \( B^+_\ell < - n^{5/6} \) or \( B^-_\ell > n^{5/6} \) and
we also have \( B_j^+ = B_{\ell}^+ \) and \( B_j^- = B_\ell^- \) for all \( j \ge \ell \) 
(as we set \( A_{j+1}^{\pm} = 0 \) in the event \( \cb_j\)). Therefore, we have
\[ Pr( \cx) \le 2 \binom{n}{2} e^{ - \frac{ n^{5/3}}{ 12 m} }. \]

\subsection{Partial vertices}

We use the same reasoning as in the last subsection, but here we break the step by 
step changes in 
\( |Y_{u,v}(i)| \) into two parts.
We write \( \left| Y_{u,v}(j) \right| \) as a sum
\[ \left| Y_{u,v}(j) \right| = \sum_{i=1}^j U_i - V_i , \]
where \( U_i \) is the number of partial vertices at \( \{u,v\} \) created when 
\( e_i \) is added and \( V_i \) is the number of 
partial vertices at \( \{u,v \} \) 
eliminated when \( e_i \) is added.  Note that if \( \{u,v\} \in E_i \) then we set
\( U_i = V_i = 0 \) (in order to maintain consistency with the definition of \( Y_{u,v} \)).

We begin with an analysis of \( V_i\).
Define \( W_0^\pm = 0 \) and
\begin{gather*}
V_i^\pm = \begin{cases} V_i + \left[ - \frac{ y^2(t) }{ q( t) } \pm (82t + 1) g_y(t) \right] \frac{1}{n} 
& \text{ if } \omega 
\not\in {\mathcal B}_{i-1} \text{ and } \{u,v\} \not\in E_{i} \\
0 & \text{ if } \omega \in {\mathcal B}_{i-1} \text{ or } \{u,v\} \in E_{i}
\end{cases} \\
W_j^\pm = \sum_{i=1}^j V_i^\pm.
\end{gather*}

\begin{claim}
\( W^\pm_0, W^\pm_1, \dots \) is a \( ( \frac{4}{ n }, \log^2 n) \)-bounded martingale pair.  
\end{claim}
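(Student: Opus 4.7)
\smallskip
\noindent
\textbf{Plan.} The structure of the proof should mirror the one already given for the open-vertices claim. As there, I would restrict attention to histories $\omega_i \not\subseteq \cb_i$ with $\{u,v\}\notin E_{i+1}$ (otherwise $V_{i+1}^{\pm}=0$ and there is nothing to check). The one-step computation splits into two contributions: for each $w\in Y_{u,v}(i)$, exactly one of the pairs $\{u,w\},\{v,w\}$ is open and the other is in $E_i$; writing $\{u',w\}$ for the unique open edge at $w$ (so $u'\in\{u,v\}$), the vertex $w$ is removed from $Y_{u,v}$ precisely when $e_{i+1}=\{u',w\}$ or when $e_{i+1}$ closes $\{u',w\}$. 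The latter happens for $|Y_{u',w}(i)|$ choices of $e_{i+1}$, so
\[
E\bigl[V_{i+1}\mid\fee_i\bigr]\;=\;\frac{1}{|O_i|}\sum_{w\in Y_{u,v}(i)}\bigl(1+|Y_{u',w}(i)|\bigr).
\]
Plugging in the concentration bounds from $\omega_i \not\subseteq \cb_i$, namely $|Y_{u,v}(i)|,|Y_{u',w}(i)|\in\sqrt{n}[y\pm g_y]$ and $|O_i|\in n^2[q\pm g_q]$, and expanding as in the previous subsection, this expectation lies in $\tfrac{1}{n}\bigl[y^2/q\pm \text{err}\bigr]$. Using the identities $y^2/q=32t^2e^{-4t^2}$ and the relations in (\ref{eq:relates}) ($g_x=e^{-4t^2}g_y$ and $g_q\le g_y/t$), the error bookkeeping gives terms of the shape $(\text{const})\cdot t\cdot g_y$ plus $g_y$, which bundle into $(82t+1)g_y$. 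This is the martingale-pair condition, exactly as in the previous claim.

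For the $(\eta,N)$-boundedness, the lower side $V_{i+1}^{\pm}\ge -\eta$ is easy: $V_{i+1}\ge 0$ and the deterministic drift $y^2/q\le 8/e<3$ (maximized at $t=1/2$), while $(82t+1)g_y/n=o(1/n)$ in the relevant range of $t\le\a\sqrt{\log n}$, so $\eta=4/n$ suffices.

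The real work is the upper bound $V_{i+1}\le \log^2 n$, which is the step where I expect the only genuine effort. I would split on $e_{i+1}$. If $e_{i+1}$ is not incident to $u$ or $v$, then for $w\in Y_{u,v}(i)$ to have its open edge $\{u',w\}$ closed, the new edge must be of the form $\{w,z\}$ with $z$ already adjacent to $u'$; in particular $w$ must be an endpoint of $e_{i+1}$, giving at most $2$ removals. If $e_{i+1}=\{u,b\}$ is incident to $u$ (the case $e_{i+1}=\{v,b\}$ being symmetric), then a removed $w$ with open edge $\{u,w\}$ must satisfy $\{v,w\},\{b,w\}\in E_i$, so $w\in Z_{v,b}(i)$; a removed $w$ with open edge $\{v,w\}$ must have a common neighbor with $v$ among $\{u,b\}$, which after ruling out $z=u$ (which forces the degenerate case $e_{i+1}=\{u,v\}$, under which $V_{i+1}=0$ by convention) forces $\{u,w\},\{b,w\}\in E_i$, i.e.\ $w\in Z_{u,b}(i)$. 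Together with the single direct hit $w=b$, this yields $V_{i+1}\le |Z_{v,b}(i)|+|Z_{u,b}(i)|+O(1)$, and using the good-event bound $|Z_{*,*}(i)|<\log^2 n$ this is at most $\log^2 n$ for $n$ large (absorbing the factor of two and constants into the coarse $\log^2 n$).

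Assembling these three pieces gives the $(4/n,\log^2 n)$-bounded martingale-pair conclusion. The hardest point, as flagged, is the case analysis showing that every removed partial vertex is pinned down to lie in some $Z_{*,*}(i)$; once that is in hand, the algebra parallels the previous subsection verbatim.
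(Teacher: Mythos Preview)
Your approach is the same as the paper's, and the martingale computation and the lower bound $V_{i+1}^\pm \ge -4/n$ are handled correctly. The only defect is in your upper bound on $V_{i+1}$ when $e_{i+1}=\{u,b\}$ is incident to $u$. Your analysis of the sub-case where $w$ has open edge $\{v,w\}$ (so $\{u,w\}\in E_i$) stops one step too early: if the new common neighbour of $v$ and $w$ is $b$, you correctly deduce $\{b,w\}\in E_i$, but you must also have $\{b,v\}\in E_i$ (since $e_{i+1}$ is not incident to $v$), and then $\{v,w\}$ was \emph{already} closed in $G_i$, contradicting $w\in Y_{u,v}(i)$. Hence this sub-case is vacuous, and the only removed vertices lie in $Z_{v,b}(i)$ (plus possibly $w=b$ itself), giving $V_{i+1}\le |Z_{v,b}(i)|+1<\log^2 n$ on the good event. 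This is exactly the paper's argument.

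As written, your bound $|Z_{v,b}(i)|+|Z_{u,b}(i)|+O(1)$ is only $\le 2\log^2 n+O(1)$, and the sentence ``absorbing the factor of two \dots\ into the coarse $\log^2 n$'' is not legitimate: $2\log^2 n$ is not at most $\log^2 n$, so the $(\cdot,\log^2 n)$-boundedness in the claim is not actually established. The fix is simply to observe that the second sub-case contributes nothing, as above.
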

\begin{proof}
We begin with the martingale conditions.  Suppose \(w\) is partial with respect to \( \{u,v\} \).  Let
\( w^* \)  be the unique 
vertex in \( \{u,v \} \) such that \( \{w^*,w\} \in O_i \).
Note that \(w\) is removed from \( X_{u,v} \) if either
\( e_{i+1}= \{w,w^*\} \) or \(e_{i+1}\) is one of the pairs in \( O_i \) that 
links \( \{w^*,w \} \) to \( Y_{w^*,w}(i)\) (other than \( \{ u,v\} \) itself).
Therefore, restricting our attention to \( \omega_i \not\subseteq \cb_i \), we have
\[ E\left[ V_{i+1} \mid \fee_i \right] = \sum_{ w \in Y_{u,v}} \frac{  |Y_{w^*,w}| }{ |O_i| }. \]
As we restrict our attention to \( \omega_i \not\subseteq \cb_i \) and \( \{u,v\} \not\in E_i \) we have
\begin{equation*}
\begin{split}
E\left[V_{i+1} \mid \fee_i \right] & \in  \frac{ \sqrt{n} (y \pm g_y) \left( \sqrt{n}(y \pm g_y) \right)}{ n^2 (q \pm g_q)} \\
& \subseteq \frac{1}{n} \left[ \frac{y^2}{q} \pm  \left( \frac{ 2g_yy  
+ g_y^2 }{ q - g_q } + \frac{ y^2 g_q }{ q( q - g_q )} \right) \right]
\\
& \subseteq  \frac{1}{n} \left[ \frac{y^2}{q} \pm \left(  17 t g_y + g_y + 65 t g_y \right) \right], 
\end{split}
\end{equation*}
and the martingale conditions are established.

It remains to establish boundedness.  Note that if \( e_{i+1} \) does not intersect \( \{u,v\} \) then the
change in \( |Y_{u,v} (i) | \) is at most 2 (as all edges that are closed when we add \( e_{i+1} \) intersect
\( e_{i+1} \)).  So, suppose \( e_{i+1} = \{u,z\} \) where \( z \neq v \).  
If the vertex \(w\) is then removed from \( Y_{u,v} \) then the edge \( \{w,u\} \) must have been
closed by \( e_{i+1} \).  This implies \( \{w,z\} \in E_i \).  Furthermore, as \( w \) is partial with respect to
\( \{u,v\} \), we have \( \{ w,v \} \in E_i \).  Thus 
\(w \in Z_{z,v}(i) \).  Therefore, the change in \( V_i \) is bounded by the maximum value of 
\( \left| Z_{x,y}(i) \right| \),
which is bounded by \( \log^2 n\).  The lower bound follows from \( y^2/q \le 8/e \).
\end{proof}
\noindent
Applying Lemmas~\ref{lem:sub}~and~~\ref{lem:super} we have
\[ Pr( W^+_m < -n^{1/3}/2), Pr( W^-_m  > n^{1/3}/2 ) \le e^{ - \frac{n^{2/3}}{48 m \log^2 n/ n}  }.  \]

Now we turn to \( U_i \).  
Define \( T_0^\pm = 0 \) and
\begin{gather*}
U_i^\pm = \begin{cases} U_i + \left[ - \frac{ 2 x(t ) }{ q( t) } \pm 14 g_y(t) \right] \frac{1}{n} 
& \text{ if } \omega 
\not\in {\mathcal B}_{i-1} \text{ and } \{u,v\} \not\in E_{i} \\
0 & \text{ if } \omega \in {\mathcal B}_{i-1} \text{ or } \{u,v\} \in E_{i}
\end{cases} \\
T_j^\pm = \sum_{i=1}^j U_i^\pm.
\end{gather*}

\begin{claim}
\( T^\pm_0, T^\pm_1, \dots \) is a \( ( \frac{5}{ n }, 1 ) \)-bounded martingale pair.  
\end{claim}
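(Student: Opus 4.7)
The plan is to follow the template established by the two preceding claims. Off the event $\{\omega_i \not\subseteq \cb_i \text{ and } \{u,v\} \not\in E_i\}$ the variable $U_{i+1}^{\pm}$ is identically zero and both conditions hold automatically, so I restrict attention to atoms $\omega_i$ satisfying both.

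For the martingale condition I first identify the contributors to $U_{i+1}$. A vertex $w$ joins $Y_{u,v}$ at step $i+1$ exactly when exactly one of $\{u,w\}, \{v,w\}$ becomes an edge. Since $e_{i+1}$ is a single pair and $w \in X_{u,v}(i)$ means both $\{u,w\}, \{v,w\} \in O_i$, this occurs iff $w \in X_{u,v}(i)$ and $e_{i+1} \in \{\{u,w\},\{v,w\}\}$. Therefore
\[
E[U_{i+1} \mid \fee_i] \;=\; \frac{2|X_{u,v}(i)|}{|O_i|} \;\in\; \frac{1}{n}\left[\frac{2x}{q} \pm \left(\frac{2g_x}{q-g_q} + \frac{2xg_q}{q(q-g_q)}\right)\right].
\]
Plugging in $q=e^{-4t^2}/2$ and $x=e^{-8t^2}$ from (\ref{eq:trajectory}), the first error term simplifies to essentially $4g_y$ (using $g_x = e^{-4t^2}g_y$ from (\ref{eq:relates})), while the second simplifies to essentially $8g_q$ (using $x/q^2 = 4$), which in turn is at most $8 g_y$ since the definitions force $g_q \le g_y$ on the whole range. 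The corrections from the denominator $q - g_q$ are negligible since $g_q/q$ tends to zero on $t \le \mu\sqrt{\log n}$. Hence for $n$ large the total error is at most $14 g_y$ --- exactly the slack built into $U_i^{\pm}$ --- and the submartingale/supermartingale conditions hold.

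For boundedness, the key deterministic fact is $U_{i+1} \in \{0,1\}$: a vertex $w$ can become newly partial at $\{u,v\}$ only if $e_{i+1}$ has one endpoint in $\{u,v\}$ and the other in $X_{u,v}(i)$, and the single pair $e_{i+1}$ can accomplish this for at most one $w$. The upper bound $U_{i+1}^{\pm} \le 1$ then follows because the correction $-2x/(qn) + 14 g_y/n$ is non-positive for large $n$ (as $2x/q = 4e^{-4t^2}$ is bounded below on $t \le \mu\sqrt{\log n}$ while $g_y/n$ is negligible). The lower bound $U_{i+1}^{\pm} \ge -5/n$ comes from $U_{i+1} \ge 0$ combined with $2x/q \le 4$ and $14 g_y/n \ll 1/n$.

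The main obstacle is establishing $\frac{2g_x}{q-g_q} + \frac{2xg_q}{q(q-g_q)} \le 14 g_y$ uniformly on $t \le \mu\sqrt{\log n}$, but this is a strictly shorter version of the analogous calculation carried out for $V$ in the preceding claim (only $x$ and $q$ appear, not the product $xy$), and is handled identically: bound $q - g_q$ from below by $q/2$ using $g_q/q = o(1)$, then apply (\ref{eq:relates}) to express $g_x$ and $g_q$ as controlled multiples of $g_y$.
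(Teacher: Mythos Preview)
Your proposal is correct and follows essentially the same route as the paper: compute $E[U_{i+1}\mid\fee_i]=2|X_{u,v}(i)|/|O_i|$, bound the resulting error by $14g_y$ via (\ref{eq:relates}) and $g_q\le g_y$, and use $U_{i+1}\in\{0,1\}$ together with $2x/q\le 4$ for boundedness. Your treatment of the upper bound is in fact slightly more explicit than the paper's, which simply cites these two facts without checking that the $+14g_y/n$ term in $U_{i+1}^{+}$ is dominated by $-2x/(qn)$ on the relevant range; your observation that $2x/q$ is bounded below while $g_y=o(1)$ fills that in.
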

\begin{proof}
We begin with the martingale conditions.  As usual we restrict our attention to \( \omega_i \not\subseteq \cb_i \).
We have
\[ E\left[ U_{i+1} \mid \fee_i \right] = \frac{  2 |X_{u,v}(i)| }{ |O_i| }, \]
and
\begin{equation*}
\begin{split}
E\left[U_{i+1} \mid \fee_i \right] & \in \frac{ 2 n (x \pm g_x) }{ n^2 (q \pm g_q)} \\
& \subseteq \frac{1}{n} \left[ \frac{2x}{q} \pm \frac{ 2g_q x + 2q g_x }{ q(q - g_q)} \right] \\
& \subseteq \frac{1}{n} \left[ \frac{ 2x }{q}  \pm \left( 9 g_q + 5 e^{4t^2} g_x \right) \right] \\
& \subseteq \frac{1}{n} \left[ \frac{ 2x }{q} \pm 14 g_y \right],
\end{split}
\end{equation*}
which establishes the martingale conditions.  

As the addition of \( e_{i+1} \) to the graph can create at 
most one new partial vertex at \( \{u,v\} \), \( U_i \) is either 1 or 0.  
Furthermore,
\( 2 x/q = 4 e^{-4t^2} \le 4 \).  These two observations establish 
the boundedness condition.
\end{proof}
\noindent
Applying Lemmas~\ref{lem:sub}~and~~\ref{lem:super} we have
\[ Pr( T^+_m < -n^{1/3}/2), Pr( T^-_m  > n^{1/3}/2 ) \le e^{ - \frac{n^{2/3}}{60 m/n }  }  \]

Now we are ready to return to the random variable \( \left| Y_{u,v}(i) \right| \) itself.  We have
\begin{equation*}
\begin{split}
\left| Y_{u,v}(j) \right| & = \sum_{i=1}^j U_i - V_i \\
& = \sum_{i=1}^j U_i^+ + \frac{1}{n} \left[ \frac{ 2x}{q} - 14 g_y \right]
- \left( \sum_{i=1}^j V_i^- + \frac{1}{n} \left[ \frac{ y^2}{q} + ( 82t +1) g_y \right]  \right) \\
& = T_j^+ - W_j^- + \frac{1}{n} \sum_{i=1}^j \left( \frac{ 2x}{q} - \frac{ y^2}{q} \right) - \frac{1}{n} \sum_{i=1}^j ( 82t + 15) g_y \\
& \ge \sqrt{n} \int_0^{t(j)}  \frac{ 2x}{q} - \frac{y^2}{q}  d\t  - n^{1/3} \int_0^{t(j)} (82\t + 15) e^{41\t^2 + 40\t} d\t
+ \left( T_j^+ - W_j^- \right) \\
& \ge \sqrt{n} \left[ y( t(j)) - g_y( t(j)) \right] + n^{1/3} + T_j^+ - W_j^-
\end{split}
\end{equation*}
Therefore, the event \( |Y_{u,v}(j)| < \sqrt{n} \left[ y(t(j)) - g_y( t(j)) \right] \) is contained in
\[ \left\{ T_j^+ < - n^{1/3}/2 \right\} \vee \left\{ W_j^- > n^{1/3}/2 \right\}. \]
We have already bounded the probabilities of these events.  The analogous argument holds for the event
\( |Y_{u,v}(j)| > \sqrt{n}( y(t(j)) + g_y( t(j)) ) \), with \( T_j^+ \) and \( W_j^- \) replaced with \( T_j^- \) 
and \( W_j^+ \), respectively.  As the random variables \( T^\pm_i \) and \( W^\pm_i \) are `frozen' 
once one of the random variables leaves the allowable range, we have
\[ Pr\left( \cy \right) \le \binom{n}{2} \cdot 2 \left(  e^{ - \frac{n^{2/3}}{48 m \log^2 n/ n}  } +  
e^{ - \frac{n^{2/3}}{60 m/n }  } \right) < 2 n^2 e^{ - n^{1/6}/48 }. \]

\subsection{Complete vertices}

Note that the probability\ that \( e_{i+1} \) adds a complete vertex at \( \{u,v \} \) is at most
\( |Y_{u,v}(i)| / | O_i | \).
So, in the event \( \cb_i\), we have
\[ Pr\left( \left| Z_{u,v}(i+1) \right| = \left|Z_{u,v}(i)\right| +1 \right) 
\le \frac{ \sqrt{n} ( y(t) + g_y(t)) }{ n^2 ( q(t) - g_q(t)) } \le \frac{ 9t}{ n^{3/2}}. \] 
Therefore, 
\[ Pr \left[ \left| Z_{u,v}(m) \right| \ge \log^2 n \right] 
\le \binom{ \a n^{3/2} \sqrt{ \log n}}{ \log^2 n} 
\left( \frac{ 9 \a \sqrt{\log n}}{n^{3/2} }\right)^{ \log^2 n } \le e^{ - \frac{1}{2} (\log^2 n) \log \log n} \]
for \(n\) sufficiently large.  Thus
\[ Pr( \cz ) \le \binom{n}{2}  e^{ - \frac{1}{2} (\log^2 n) \log \log n}. \]

\section{Independent Sets}
\label{sec:upper}

Our goal is now to prove Theorem~\ref{thm:upper}.  We will bound
from above the probability, conditional on $\overline{B_m}$, that
any fixed set $K$ of $\gamma\sqrt{n\log n}$ vertices is independent.
This bound will be so small that it remains small when multiplied
by the number of such $K$.  The conditioning on $\overline{B_m}$
tells us that the variables $Q, X_{u,v},Y_{u,v}$ all remain
quite close to $q(t)n^2,x(t)n,y(t)\sqrt{n}$ throughout the
process.  As it happens, the strength of the error terms $g_q,g_y,g_x$ 
does not play a major role in the calculations
below.  The reader might, at first reading, set $g_q=g_y=g_x=0$
so as to get a less cluttered view of the techniques involved.

Recall that \(\a, \b, \g\) and 
\(\r\) are constants where \( \a \) and \( \r \) are small, \( \b \) is large relative to
\( \a \) and \( \g \) is large relative to \( \a \) and \( \b \).   Also recall
\( m = \a \sqrt{ \log n} \cdot n^{3/2} \).  We make 2 initial observations (Claims~\ref{cl:degrees}~and~\ref{cl:obsv2}).
Let \( \cd_i \) be the event that \( G_i \) has a vertex 
of degree greater than \( \beta \sqrt{ n \log n } \).
\begin{claim} If \(n\) is sufficiently large then
\label{cl:degrees}
\[ Pr( \cd_m \wedge \overline{\cb_m} ) \le e^{ -n^{1/5}}  . \]
\end{claim}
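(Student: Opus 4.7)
The plan is to fix a vertex $v$ and bound $d_m(v)$ in two steps: (a) establish that $P_v(i) := |\{w : \{v,w\} \in O_i\}|$ (the open-degree of $v$) concentrates around $2q(t)n = e^{-4t^2}n$; and (b) deduce concentration of $d_m(v)$ around $2\a\sqrt{n\log n}$ by noting that $Pr[v \in e_{i+1} \mid \fee_i] = P_v(i)/|O_i| \approx 2/n$. Since $\b$ is large relative to $\a$, the event $\{d_m(v) > \b\sqrt{n\log n}\}$ will have probability much smaller than $e^{-n^{1/5}}/n$, and a union bound over $v$ will finish the claim.

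For (a), I would treat $P_v$ analogously to the tracked variables of Section~\ref{sec:lower}. Each $z \in O(v,i)$ leaves $O(v,\cdot)$ precisely when $e_{i+1} = \{v,z\}$ or $e_{i+1}$ is one of the $|Y_{v,z}(i)|$ edges of $O_i$ through a partial vertex at $\{v,z\}$; so on $\overline{\cb_i}$ we have
\[ E[P_v(i+1) - P_v(i) \mid \fee_i] = -\sum_{z \in O(v,i)} \frac{1+|Y_{v,z}(i)|}{|O_i|} \in -\frac{P_v(i)}{n^{3/2}}\bigl(8t \pm O(g_y/q)\bigr), \]
matching the ODE $dp/dt = -8tp$ with $p(t) = e^{-4t^2}$. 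The per-step jumps of $P_v$ are at most $1+(y(t)+g_y(t))\sqrt{n} = O(\sqrt{n\log n})$ when $e_{i+1}$ is incident to $v$, and at most $1$ otherwise. Setting up martingale pairs analogous to the $W_j^\pm/T_j^\pm$ treatment of $|Y_{u,v}(i)|$ and invoking Lemmas~\ref{lem:sub}~and~\ref{lem:super} with an appropriate error function yields $P_v(i) \le (1+o(1))e^{-4t^2}n$ throughout $i \le m$ with failure probability at most $e^{-n^{1/5}}/n^2$; a union bound over $v$ handles all vertices simultaneously.

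For (b), let $S_j := d_j(v) - \sum_{i<j} P_v(i)/|O_i|$; this is a martingale with jumps in $[-1,1]$. Under the event from (a) together with $\overline{\cb_i}$, the summed conditional drift $\sum_{i<j} P_v(i)/|O_i|$ is at most $(2+o(1))\a\sqrt{n\log n}$, and the same bound holds for the summed conditional variance (each jump being a centred indicator). Freedman's martingale inequality (or an adaptation of Lemma~\ref{lem:super} past the $a<m\eta$ threshold into the Chernoff regime) then gives $Pr(d_m(v) > \b\sqrt{n\log n}) \le \exp(-c(\b-2\a)^2\sqrt{n\log n}/\b)$, which for our constants is far smaller than $e^{-n^{1/5}}/n$. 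A final union bound over $v$ completes the proof.

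The main obstacle is step (a): the martingale analysis of $P_v(i)$. Its setup is analogous to, but slightly cleaner than, the $|Y_{u,v}(i)|$ analysis in Section~\ref{sec:lower}. The principal delicate point is that $P_v$'s downward jumps are $O(\sqrt{n\log n})$, much larger than its per-step expected drift, so the error function and deviation scale must be chosen carefully so that the failure probability emerges as $e^{-n^{1/5}}/n^2$ at the target multiplicative deviation --- this is how the specific exponent $n^{1/5}$ of the claim naturally arises.
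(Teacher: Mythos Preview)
Your proposal is correct and tracks the paper's proof closely. Both arguments first control the open degree $P_v(i)=|W_v(i)|$ via a (sub)martingale comparison with the trajectory $e^{-4t^2}n$, and then leverage this to bound $d_m(v)$. The paper only needs the upper bound on $W_v$, so it runs just the submartingale side (your remark that full two-sided concentration is required is slightly more than needed). The one substantive difference is in the second step: having established $|W_v(i)|\le 4|O_i|/n$, the paper simply applies the crude union bound
\[
Pr\bigl(d_m(v)\ge \beta\sqrt{n\log n}\bigr)\ \le\ \binom{m}{\beta\sqrt{n\log n}}\Bigl(\frac{4}{n}\Bigr)^{\beta\sqrt{n\log n}}\ \le\ \Bigl(\frac{4e\mu}{\beta}\Bigr)^{\beta\sqrt{n\log n}},
\]
whereas you set up a degree martingale and invoke Freedman. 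Both routes give bounds far stronger than $e^{-n^{1/5}}$, so your comment that the exponent $1/5$ ``naturally arises'' from the error analysis is not quite accurate --- it is just a convenient, very loose value. The paper's union bound is more elementary and stays within the toolkit already developed; your Freedman argument works equally well and would, as a bonus, also yield a matching lower bound on $d_m(v)$ if that were wanted.
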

\begin{proof}
We begin by establishing an upper bound on the 
number of open pairs at each vertex.  
For each vertex \(v\) let 
\( W_v(i) \) be the set of pairs in \( O_i \) that contain \(v\).
Let \( A_i \) be the number of open pairs that contain \(v\) that are removed from \( W_v \)
when the edge \( e_i \) is added to the process.  Note that we have
\[ E \left[ A_{i+1} \mid \fee_i \right] = \sum_{w \in W_v(i)} \frac{ 1 + |Y_{v,w}(i)| }{ |O_i| } \]
Define
\[  B_{i+1} = \begin{cases}
A_{i+1} - \frac{1}{\sqrt{n}} \left(  8t e^{-4t^2} - 20 g_y \right)
& \text{ if }  W_v(i) >  e^{-4 t^2} n \text{ and } \omega \not\in \cb_{i} \\
0 & \text{ if }  W_v(i) \le  e^{-4 t^2} n \text{ or } \omega \in \cb_{i}
\end{cases} \]
Note that (restricting our attention to \( \omega_i \not\subseteq \cb_i \) and \(  W_v(i) >  e^{-4 t^2} n\))
\begin{equation*}
\begin{split}
E\left[ B_{i+1} \mid \fee_{i} \right] & \ge  e^{- 4 t^2} n \left( \frac{ 1 + \sqrt{n}( y - g_y )}{ n^2( q + g_q)} \right)
- \frac{1}{\sqrt{n}} \left(  8t e^{-4t^2} - 20 g_y \right) \\
& \ge \frac{e^{-4t^2}}{ \sqrt{n}} \left( \frac{y}{q} - \frac{ g_y}{ q + g_q} - \frac{ g_q y}{ q ( q + g_q) } \right) 
- \frac{1}{\sqrt{n}} \left(  8t e^{-4t^2} - 20 g_y \right) \\
& \ge \frac{ 1}{ \sqrt{n}} \left( - 3 g_y - 17 t g_q + 20 g_y \right) \\
& \ge 0.
\end{split}
\end{equation*}
Therefore, any sequence of the form \( B_\ell, B_\ell+ B_{\ell+1}, \dots, \sum_{i=\ell}^j B_i, \dots \) 
is a \( ( 2/ \sqrt{n}, \sqrt{n} ) \)-bounded
submartingale.  Therefore, for any \( \ell < j \) we have
\[ Pr\left( \sum_{i=\ell}^j B_i  \le - n^{7/8} \right) \le e^{ - \frac{n^{7/4}}{ 6m }}. \]

Now consider the event 
\(  |W_v(j)| > e^{- 4t(j)^2} n + 2 n^{7/8} \).  In this event there exists a maximum
\(\ell < j \) such that \( |W_v(\ell) | \le e^{-4t(\ell)^2} n \).  We have
\[ \sum_{ i = \ell+2}^j A_i < \left( e^{-4 t(\ell)^2} - e^{ - 4 t(j)^2} \right) n  - 2 n^{7/8}, \]
which implies
\begin{equation*}
\begin{split}
\sum_{i = \ell+2}^j  B_i & < \left( e^{-4 t(\ell)^2} - e^{ - 4 t(j)^2} \right) n  - 2 n^{7/8} 
-  \frac{1}{\sqrt{n}} \sum_{i=\ell+1}^{j-1}  \left(  8t e^{-4t^2} - 20 g_y \right) \\
& < - \frac{3}{2} n^{7/8} + n \int_{ t(\ell+1)}^{ t(j) } 20 g_y(\t) d\t \\
& < - n^{7/8}.
\end{split}
\end{equation*}
Let \( \cd_m^\prime \) be the event that 
there exists a vertex \(v\) and a step \(j \le m\) such that 
\(|W_v(j)| >  e^{- 4t(j)^2} n + 2 n^{7/8} \).  We have shown
\[ Pr \left( \cd_m^\prime \wedge \overline{\cb_m} \right) \le n \binom{m}{2} 
\exp \left\{ - n^{7/4}/ (6 m) \right\} . \] 

So, we can restrict our attention to the event \(\overline{\cd_j^\prime} \).  Note that here we
have \( |W_v(j)| \le 4 |O_j |/n \) for all \( j,v \).  Now we 
simply use the 
union bound.
\begin{multline*}
Pr\left( \cd_m \wedge \overline{\cb_m} \wedge \overline{\cd_m^\prime}  \right) \le n \binom{ \a \sqrt{ \log n } \cdot n^{3/2}}{ \b \sqrt{ n \log n}} 
\left( \frac{ 4}{n} \right)^{\b \sqrt{ \log n \cdot n }} \\ 
\le n \left(  \frac{ \a \sqrt{ \log n} \cdot n^{3/2} \cdot 4e}{ \b \sqrt{n \log n} \cdot 
n} \right)^{\b \sqrt{ n \log n}}  = n\left( \frac{ \mu 4 e }{ \beta} \right)^{\b \sqrt{ n \log n}}. 
\end{multline*}
\end{proof}

Next we consider the number of open pairs in sufficiently large bipartite subgraphs.  
Let \( A, B \) be disjoint subsets of \( [n] \) such that 
\[  |A| = |B| = \left( \frac{ \g - \b}{2} \right) \sqrt{n \log n} = k. \]
We track the evolution of the number of pairs in
\( O_i \) that intersect both \(A\) and \(B\).  Note that a vertex with large degree in either 
\(A\) and \(B\) can cause a large one step change in this variable.  To deal
with this possibility, we introduce the following definition.  
Let \( A \times B \) be the set of pairs \( \{u,v\} \in \binom{[n]}{2} \) that intersect both \(A\) and
\(B\). We say that the
pair \( \{u,v\} \in A \times B \) is {\bf closed with respect to \(A,B\)} if
there exists \(x \not\in A \cup B \) and \( j \le i \) such that 
\[ |N_j(x) \cap A|, | N_j(x) \cap B| \le \frac{k}{n^{\r}} \ \ \ \ \ \ \ \text{ and }
\ \ \ \ \ \ \ u,v \in N_j(x). \]
A pair \( \{u,v\} \in A \times B \) is {\bf open with respect to \( A,B \)} if \( \{u,v \} \not\in E_i\) and 
\( \{u,v\} \) is not closed
with respect to \( A,B\).
Define
\[ W_{A,B}(i) = \left\{ \{u,v\} \in A \times B : \{u,v\} \text{ is open with 
respect to } A,B \text{ in } G_i \right\}. \]
Note that a pair \( \{u,v\} \in A \times B \) can be 
closed (i.e. in \( C_i \)) and still be in 
\( W_{A,B}(i) \).  We stop tracking \( W_{A,B} \) as soon as a single edge falls in \( A \cup B \); formally,
if \( E_i \cap \binom{ A \cup B}{2} \neq \emptyset \) or \( \omega_i \subseteq \cd_i \vee \cb_i \) then we set
\( W_{A,B}(i) = W_{A,B}(i-1) \).

Let \( \cp_j \) be the event there exist \(A,B \in \binom{[n]}{k} \) and a step \(i \le j\) such that  
\[ \binom{ A \cup B}{2} \cap E_i = \emptyset \ \ \ \text{ and } \ \ \ 
\left| W_{A,B}(i) \right| < e^{-4t^2} 
k^2 - 2n^{1 - \r/3} \]
\begin{claim} 
\label{cl:obsv2}
If \(n\) is sufficiently large then
\[ Pr\left( \cp_m \wedge \overline{ \cb_m} \right) \le e^{- n^{1/2}}.  \]
\end{claim}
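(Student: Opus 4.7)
The plan is to fix disjoint $k$-subsets $A,B \subset [n]$, prove tight concentration of $W_{A,B}(j)$ around the trajectory $e^{-4t(j)^2}k^2$ for this fixed $(A,B)$, and then take a union bound over the $\binom{n}{k}^2 \le e^{O(\sqrt{n}(\log n)^{3/2})}$ choices. The trajectory is the right target because $(A\times B)\cap O_i \subseteq W_{A,B}(i)$ and $|(A\times B)\cap O_i|\approx e^{-4t^2}k^2$. Following the wholistic template of Section~\ref{sec:lower}, concentration will be obtained by applying Lemma~\ref{lem:super} to a suitable supermartingale.

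The analysis rests on two one-step estimates. First, for any pair $\{u,v\}\in W_{A,B}(i)$, the probability that it leaves $W_{A,B}$ at step $i+1$ is bounded by $|Y_{u,v}(i)|/|O_i|$, because any new common neighbor $w$ of $u,v$ must be a partial vertex of $\{u,v\}$, and closure requires $e_{i+1}$ to match the unique open edge in the partial triangle at $w$. Conditioning on $\overline{\cb_i}$ gives
\[
E\bigl[W_{A,B}(i)-W_{A,B}(i+1) \mid \fee_i\bigr] \;\le\; W_{A,B}(i) \cdot \frac{(y(t)+g_y(t))\sqrt{n}}{(q(t)-g_q(t))n^2},
\]
matching the derivative $-8te^{-4t^2}k^2/n^{3/2}$ of $e^{-4t^2}k^2$ once $W_{A,B}(i)\approx e^{-4t^2}k^2$. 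Second, the per-step decrease in $W_{A,B}$ is at most $k/n^\rho$. Indeed, if $e_{i+1}=\{a,w\}$ with $a\in A$ and $w\notin A\cup B$, the pairs that close w.r.t.\ $A,B$ lie in $\{\{a,b\}: b\in N_i(w)\cap B\}$, and the small-degree requirement in the definition forces this set to have size at most $|N_{i+1}(w)\cap B|\le k/n^\rho$ (otherwise no pair closes w.r.t.\ $A,B$). Edges disjoint from $A\cup B$ cause no change, and tracking is frozen when $e_{i+1}$ lands inside $A\cup B$.

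With these estimates in hand I would introduce the stopping time $\tau$ equal to the first $j\le m$ at which $W_{A,B}(j) < e^{-4t(j)^2}k^2 - 2n^{1-\rho/3}$, or $\omega_j\subseteq\cb_j\vee\cd_j$, or an edge lands in $\binom{A\cup B}{2}$. Letting $\bar D_i$ denote the explicit upper bound on the conditional expected decrease from the display above, set $T_0=0$ and
\[
T_j - T_{j-1} \;=\; -\bigl(W_{A,B}(j-1) - W_{A,B}(j)\bigr) + \bar D_j \quad\text{for } j\le\tau,
\]
with $T_j = T_{j-1}$ for $j>\tau$. Then $T_j$ is a supermartingale, and by comparing $\sum_{i\le\tau}\bar D_i$ to the trajectory's total drop $k^2\bigl(1-e^{-4t(\tau)^2}\bigr)$ via the self-correcting differential inequality $\bar D_i\le W_{A,B}(i-1)\cdot 8t/n^{3/2}(1+o(1))$, the event that $\tau$ is triggered by the $W_{A,B}$ condition is contained in $\{T_m\ge n^{1-\rho/3}\}$. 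The process $(T_j)$ is $(\eta,N)$-bounded with $\eta = O(\bar D_i) = O((\log n)^{3/2}/\sqrt{n})$ (max downward jump, from a step with no closures) and $N = O(k/n^\rho) = O(n^{1/2-\rho}\sqrt{\log n})$ (max upward jump, from one small-degree witness closing up to $k/n^\rho$ pairs), so $\eta\le N/10$, and Lemma~\ref{lem:super} yields
\[
\Pr[T_m \ge n^{1-\rho/3}] \;\le\; \exp\!\left(-\frac{n^{2-2\rho/3}}{3\eta m N}\right) \;\le\; \exp\!\left(-\frac{n^{1/2+\rho/3}}{(\log n)^{O(1)}}\right).
\]

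Multiplying by $\binom{n}{k}^2\le e^{O(\sqrt{n}(\log n)^{3/2})}$ still beats $e^{-n^{1/2}}$ because $n^{\rho/3}=n^{1/96}$ dominates any polylogarithmic factor for $n$ large. The main obstacle is the bookkeeping in the supermartingale construction: $\bar D_i$ is proportional to $W_{A,B}(i-1)$ and the stopping condition controls $W_{A,B}$ only from below, so obtaining $\sum_{i\le\tau}\bar D_i\le k^2(1-e^{-4t(\tau)^2})+O(n^{1-\rho/3})$ requires exploiting the self-correcting property of $W_{A,B}$ (if $W_{A,B}$ drifts above trajectory, the drift pulls it back) or running a twin upper-tail argument concurrently. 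Equally essential is the small-degree gadget in the definition of $W_{A,B}$: without it a single new witness could close as many as $\beta\sqrt{n\log n}$ pairs in $A\times B$ at once, pushing $N$ to a regime where Azuma cannot reach the sub-$e^{-n^{1/2}}$ tail required for the union bound.
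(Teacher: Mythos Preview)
Your approach is essentially the paper's: the same one-step drift bound via $|Y_{u,v}(i)|/|O_i|$, the same Lipschitz bound $k/n^\rho$ coming from the small-degree clause in the definition of $W_{A,B}$, the same application of Lemma~\ref{lem:super} yielding a tail of order $\exp(-n^{1/2+\rho/3}/\mathrm{polylog})$, and the same union bound over $\binom{n}{k}^2$ pairs $(A,B)$.

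The one place you diverge is the supermartingale construction, and the obstacle you flag---that $\bar D_i$ is proportional to $W_{A,B}(i-1)$ while the stopping rule only controls $W_{A,B}$ from below---is exactly the issue. You propose handling it by a self-correcting argument or a twin upper-tail; the paper sidesteps it more cheaply. Rather than taking $\bar D_i$ to be the true upper bound on the expected drop, the paper uses the \emph{deterministic} quantity $k^2(8te^{-4t^2}+20g_y)/n^{3/2}$ (i.e.\ it plugs in the trajectory value $e^{-4t^2}k^2$ in place of $W_{A,B}(i-1)$), and then sets the increment to zero whenever $W_{A,B}(i)\ge e^{-4t^2}k^2$. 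On the active steps $W_{A,B}(i)<e^{-4t^2}k^2$, so the deterministic drift dominates the true conditional expectation and the supermartingale property holds; on the inactive steps the increment is zero. The sum $\sum\bar D_i$ is now a deterministic Riemann sum that integrates to $k^2(e^{-4t(\ell)^2}-e^{-4t(j)^2})$, which is precisely the trajectory drop. To connect the deviation event to the supermartingale, the paper then lets $\ell$ be the \emph{last} step before $j$ at which $W_{A,B}(\ell)\ge e^{-4t(\ell)^2}k^2$; since all increments on $(\ell,j]$ are active, the condition $W_{A,B}(j)<e^{-4t(j)^2}k^2-2n^{1-\rho/3}$ forces $\sum_{i=\ell+1}^j Y_i>n^{1-\rho/3}$ after the integral comparison. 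This costs only an extra factor of $m^2$ in the union bound (for the choice of $\ell$ and $j$), which is harmless. With this device your bookkeeping obstacle disappears and no upper-tail companion is needed.
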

\begin{proof}

Let \(X_i\) be the number of pairs that leave \( W_{A,B} \) at step \(i\)  of the process.  
We have
\[ E[ X_{i+1} \mid \fee_i ] \le \sum_{ \{u,v\} \in W_{A,B}} \frac{ Y_{u,v}(i) }{ |O_i| }.\]
Note that we only have an upper bound here as there may be edges between \( \{u,v\} \) and  \( Y_{u,v} \) 
that would close
\( \{u,v\} \) without removing \( \{u,v\} \) from \( W_{A,B} \).  Define
\[ Y_{i+1} = \begin{cases} X_{i+1} - \frac{ \log n}{ \sqrt{n}}  \left( \frac{ (\g - \b)^2 }{4} \right) 
\left[ 8t e^{-4t^2} + 20 g_y \right] & \text{ if } |W_{A,B}(i)| < e^{-4t^2}  
k^2  \text{ and } \omega \not\in \cb_i   \\
0 & \text{ if } |W_{A,B}(i)| \ge e^{-4t^2} 
k^2  \text{ or } \omega \in \cb_i 
\end{cases} \] 
Note that
\begin{equation*}
\begin{split}
E[ Y_{i+1} \mid \fee_i ] & \le 
e^{-4t^2} 
k^2 \left( \frac{  \sqrt{n}( y + g_y)}{ n^2 ( q - g_q)} \right)
- \frac{ \log n}{ \sqrt{n}}  \left( \frac{ (\g - \b)^2 }{4} \right) 
\left[ 8t e^{-4t^2} + 20 g_y \right]
\\
& \le  \left( \frac{ (\g - \b)^2 }{4} \right) \frac{ \log n }{ \sqrt{n} }  
\left(  e^{-4t^2} \left[ \frac{y}{q} + \frac{ g_y}{ q - g_q} + \frac{ g_q y}{ q ( q - g_q) } \right]
- \left[ 8t e^{-4t^2} + 20 g_y \right] \right) \\
& \le  \left( \frac{ (\g - \b)^2 }{4} \right) \frac{ \log n }{ \sqrt{n} }  
\left(  3 g_y + 17 t g_q - 20 g_y \right) \\
& \le 0.
\end{split}
\end{equation*}
Therefore, \( Y_\ell, Y_\ell+ Y_{\ell+1}, \dots, \sum_{i = \ell}^j Y_i, \dots \) is a 
\( ( \frac{ 2 \g^2 \log n}{ \sqrt{n}},  k n^{- \rho} ) \)--bounded supermartingale.  It follows 
that we have
\begin{equation}
\label{eq:probs}
Pr \left[ \sum_{i=\ell}^j Y_i > n^{1 - \r/3} \right] \le \exp \left\{ - \frac{ n^{2 - 2\r/3} }{ 3 \g^3 m \log^{3/2} n \cdot n^{-\r} } \right\} 
= \exp \left\{ - \frac{ n^{1/2 + \r/3}}{ 3 \a \g^3 \log ^2 n } \right\}.
\end{equation}

Now we turn to the event \( \cp_j \wedge \overline{ \cb_j} \), where we assume that it is step \(j\) 
where \( |W_{A,B}(j)| \) is too small for the first time.  There exists a maximum \( \ell < j \) such that 
\( | W_{A,B}(\ell) | \ge e^{-4t(\ell)^2} 
k^2 \).  Then
\begin{equation*}
\begin{split}
\left| W_{A,B}(j) \right| & >  e^{-4t(\ell)^2} 
k^2  - \sum_{i=\ell+1}^j X_i \\
& =  e^{-4t(\ell)^2} 
k^2 - X_{\ell+1}  - \sum_{i=\ell+2}^j Y_i - 
\frac{ \log n}{ \sqrt{n}}  \left( \frac{ (\g - \b)^2 }{4} \right)  \sum_{i=\ell+2}^j \left[ 8t e^{-4t^2} + 20 g_y \right] \\
& >  e^{-4t(j)^2} 
k^2 - \sum_{i=\ell+1}^j Y_i - k^2 \int_{t(\ell)}^{t(j)} 20 g_y( \t) d\t - \sqrt{n}.
\end{split}
\end{equation*}
Therefore, applying (\ref{eq:probs}), we have
\[ Pr\left( \cp_m \wedge \overline{\cb_m} \right) \le 
\binom{n}{ k
}^2 \cdot \left( n^{3/2} \sqrt{\log n} \right)^2 \cdot 
\exp \left\{ -  \frac{ n^{1/2 + \r/3}}{ 3 \a \g^3 \log^2 n}  \right\}. \]

\end{proof}

Consider a fixed set \(K\) of \(\g \sqrt{n \log n} \) vertices.  We bound the probability that
\(K\) is independent in \( G_m \) by first showing that if \( K \) is independent in \( G_i \) (and we
are not in the `bad' event \( \cb_i \vee \cd_i \vee \cp_i \))
then the number of pairs in \( \binom{K}{2} \cap O_i \) is at least
a constant time \( e^{-4t^2} |K|^2 \).    This implies that the edge \( e_{i+1} \) 
has a reasonably good chance of
falling in \(K\).  

We restrict our attention to 
\( \overline{\cb_m} \wedge \overline{\cd_m} \wedge \overline{\cp_m} \).
For each step \(i\) of the process such that \( \binom{K}{2} \cap E_i = \emptyset \) 
let \( L_i\) be the set of vertices \(x\) such that
\( x \not\in K \) and \( | N_i(x) \cap K | > k/ n^\r \).  Set
\[ \cn_i = \left\{ N_i(x) \cap K : x \in L_i \right\}. \]
We first note that, since co-degrees are bounded 
when we are not in the event
\( {\mathcal B}_i \), we have
\begin{gather*}
X,Y \in {\mathcal N}_i \ \ \ \Rightarrow \ \ \ |X \cap Y| \le \log^2 n.
\end{gather*}
It follows that the cardinality of the union of \( f \) sets in 
\( {\mathcal N}_i  \) is
at least \( f k/ n^\r - f^2 \log^2 n \), and therefore
\[ \left| L_i \right| \le 2 n^\r. \] 
Furthermore, as we restrict our attention to \( \overline{ \cd_i} \) ,
we have
\[ X \in {\mathcal N}_i \ \ \ \ \Rightarrow \ \ \ \  |X| \le \b \sqrt{ n \log n}. \]
Now, we identify disjoint sets \( A,B \) such that the set of pairs \(A \times B\)
is essentially disjoint from \( \binom{X}{2} \) for all \( X \in {\mathcal N}_i \).
Form \( A \subseteq K \) such that \( |A| = k \) by iteratively adding sets from \( {\mathcal N}_i \) for
as long as possible.  Let \( B \subseteq K \setminus A \) have the property that \( |B|=k\) and \( B \cap X = \emptyset \) for all
\( X \in {\mathcal N}_i \) that are used to form \(A\).  Note that we have
\begin{gather*}
X \in {\mathcal N}_i \ \ \ \ \ \ \Rightarrow \ \ \ \ \ \  | X \cap A| \le \log^2n\cdot 2 n^{\rho} \ \text{ or } \  
| X \cap B| = 0. 
\end{gather*}
Note that the number of edges in \( W_{A,B}(i) \) that are in \( C_i \) is
at most 
\[ | L_i|  \left( 2 \log^2n \cdot n^{\rho} \right) \cdot \beta \sqrt{n \log n} \le 4 \b \log^{5/2}n \cdot n^{1/2 + 2 \rho}. \]
Therefore, since \( \omega_i \not\subseteq \cp_i \),
\begin{equation*}
\begin{split}
\left| O_i \cap \binom{ K}{2} \right| & \ge  e^{-4t^2} \left( \frac{ ( \gamma -\beta)^2 }{4} \right) n \log n -  2n^{1 - \r/3} - 
4 \beta \log^{5/2}n \cdot n^{1/2 + 2 \rho} \\
&  \ge e^{-4t^2} \cdot \frac{ (\gamma-\b)^2}{5} \cdot n \log n. 
\end{split}
\end{equation*}
Thus, since \( \omega_i \not\subseteq \cb_i\),
\[  Pr \left( e_{i+1} \in \binom{K}{2} \right) \ge \frac{ (\gamma - \b)^2 \log n }{6 n}, \]
and the probability that \(K\) remains independent 
is at most 
\begin{equation*}
\begin{split}
 \left( 1 - \frac{ (\gamma-\b)^2 \log n }{6 n} \right)^{\a \sqrt{\log n} \cdot n^{3/2}}
\le \exp \left\{ - \frac{(\gamma-\b)^2}{6} \a \log^{3/2} n \cdot \sqrt{n} \right\}.
\end{split}
\end{equation*}
On the other hand, the number of \(\gamma \sqrt{n \log n} \)-element sets of vertices is
\[ \binom{n}{ \gamma \sqrt{n \log n} } \le \left( \frac{ n e}{ \g \sqrt{n \log n} } \right)^{ \gamma \sqrt{\log n \cdot n} }
\le \exp \left\{ \frac{\gamma}{2} \log^{3/2} n \cdot \sqrt{n} \right\}. \]
Theorem~\ref{thm:upper} now follows from the union bound.

\section{The \(K_4\)-free Process}
\label{sec:extra}

We prove Theorem~\ref{thm:extra} by
analyzing the 
\(K_4\)-free process on \(n\) vertices, showing that 
it produces a graph with independence number 
\( O ( \log^{4/5}n \cdot n^{2/5} ) \).

As in the analysis of the \( K_3\)-free process, we 
let \( E_i \) be the set of edges chosen through the
first \(i\) steps in the process, \(C_i \subseteq \binom{[n]}{2}\) be the set of
forbidden pairs in \( G_i\) and \( O_i \subseteq \binom{[n]}{2} \) be the set of
available pairs in \( G_i\).

We track the following random variables through the evolution 
of the \(K_4\)-free process.  Let \( Q(i) \) be \( |O_i|\), the number of open pairs in \( \binom{[n]}{2} \) after
\(i\) steps of the process.  
For \( A \in \binom{[n]}{2} \) and \( f \in\{ 0, 1,2,3, 4 \} \) 
let \( X_{A,f}(i) \) be the collection of sets \( B \in \binom{[n]}{2} \) such that
\[ \left|  E_i \cap \binom{ A \cup B}{2} \right| = f  \ \ \ \ \text{ and } \ \ \ \
C_i \cap \binom{ A \cup B}{2} \subseteq \binom{A}{2}.
\]
Furthermore, for \( f \in \{ 0, 1,2,3 \} \) and \( A \in \binom{[n]}{3} \) let 
\( Y_{A,f}(i) \) be the set of vertices \(v\) such that
\[ \left| E_i \cap \left( A \times \{v\} \right) \right| = f   \ \ \ \ \text{ and } \ \ \ \
C_i \cap \binom{ A \cup B}{2} \subseteq \binom{A}{2}. \]
Of course, the random variables \( X_{A,f} \) are the variables we are most interested in tracking;
the variables \( Y_{A,f} \) are introduced in order to maintain bounds on the one-step changes in
the variables that comprise \( X_{A,f} \).
We stop tracking the variables once \( \binom{A}{2} \subseteq E_i \), formally setting
\( X_{A,f}(i) = X_{A,f}(i-1) \) and \( Y_{A,f}(i) = Y_{A,f}(i-1) \) in this situation.
Our scaling is given by \( t= t(i) = i/ n^{8/5} \).

We introduce functions \(q(t)\), \( x_f(t) \) for \( f = 0, 1, 2 ,3, 4 \), and \( y_f(t) \) for \( f = 0,1,2\).  Our 
guess for the purpose of setting up the differential equations is the following
\[
Q(i) \approx q(t) n^2  \hskip1cm
\left| X_{A,f}(i) \right| \approx x_f(t) n^{2 - \frac{2f}{5}}  \hskip1cm
\left| Y_{A,f}(i) \right| \approx y_f(t) n^{1 - \frac{2f}{5}}.
\]
This leads to the system of differential equations
\begin{gather*}
\frac{ dq}{dt} = - x_4 \ \ \ \ \ \ \ \frac{ dx_0}{dt} = - \frac{ 5 x_0 x_4}{ q}  \ \ \ \ \ \ \
\frac{ dx_f}{dt} =  \frac{ (6-f) x_{f-1} }{ q} - \frac{ (5-f) x_f x_4}{q} \ \  \text{ for }
 f = 1, 2, 3, 4 
\end{gather*}
with initial condition \( q(0) = 1/2 \), \( x_0(0) = 1/2 \) and \( x_1(0) = \dots = x_4(0) = 0 \).  
This has solution
\begin{gather*}
q(t) = \frac{1}{2} e^{- 16 t^5} \ \ \ \ \ \ \ \ \ \
x_f(t) = 2^{f-1} \binom{5}{f} t^f e^{ -16 (5 - f) t^5}    \ \ \ \ \text{ for }
 f = 0, 1, 2, 3, 4 
\end{gather*}
With this solution in hand, we turn to \( y_f(t) \).  Here we have the equations
\[ \frac{ dy_0}{dt} = - \frac{3 y_0 x_4}{q} \ \ \ \ \ \ \ \ \ \ \  \frac{ dy_f}{dt} = \frac{ (4-f) y_{f-1}}{q} - \frac{ (3-f) y_f x_4}{q}
\ \ \ \text{ for } f = 1,2 \]
with initial condition \( y_0(0) = 1, y_1(0) = 0 \) and \( y_2(0) = 0 \).
This has solution
\[ y_f(t) = 2^f \binom{3}{f} t^f e^{ -16( 3-f) t^5 }. \]
Note that this suggests that the \(K_4\)-free process terminates 
with \( \Theta\left( n^{8/5} \cdot \log^{1/5}n \right) \) edges.

In order to state our stability results we introduce error functions that slowly decay as
the process evolves.  The polynomial \( p(t) \) has degree 5 and positive coefficients.  We do not
explicitly define this polynomial; it suffices that its coefficients are sufficiently large.   Define
\begin{gather*}
f_q = \begin{cases}  e^{ p(t)} & \text{ if } t \le 1 \\ 
\frac{ e^{ p(t)} }{ t^4 } & \text{ if } t > 1 \end{cases} 
\ \ \ \ \ \ \ \ \ \ \  f_f =  e^{ p(t) - 16 (4-f) t^5 } \ \ \  \text{ for } f = 0, 1, 2, 3, 4\\
 h_f =  e^{ p(t) - 16 (2-f) t^5 } \ \ \  \text{ for } f = 0, 1, 2. 
\end{gather*}
Define \( \cb_i \) to be the event that there exists \( j \le i \) such that
\[  \left| Q(j) -  q( t(j)) n^2 \right| \ge f_q( t(j)) n^{29/15} \]
or there is a set \( A \in \binom{[n]}{2} \) and \( f \in \{ 0,1, \dots, 4\} \) 
such that \( \binom{A}{2} \not\subseteq E_j \) and
\[  \left|  \left| X_{A,f}(j) \right| -  x_f( t(j)) n^{2 - \frac{ 2f}{5}} \right| \ge f_f( t(j)) n^{2 - \frac{ 2f}{5} - \frac{1}{15}} \]
or there is a set \( A \in \binom{[n]}{3} \) and \( f \in \{ 0,1,2\} \) such that  \( \binom{A}{2} \not\subseteq E_j \) and
\[  \left| Y_{A,f}( j) \right| > y_f(t(j)) n^{1 - \frac{2f}{5}} + h_f( t(j)) n^{1 - \frac{ 2f}{5} - \frac{1}{15}} \]
or there is a set \( A \in \binom{[n]}{3} \) such that  \( \binom{A}{2} \not\subseteq E_j \) and
\[  \left| Y_{A,3}(j) \right| > 15. \]
We introduce absolute constants \( \mu,\rho\) and \( \gamma \).  As in our analysis of the \( K_4\)-free
process, \( \mu \) and \( \rho \) are small relative to \( p(t)\) and \( \gamma \) is
large with respect to \( \mu \).  Define \( m = \mu n^{8/5} \log^{1/5} n \).
\begin{theorem} If \(n\) is sufficiently large then
\label{thm:line}
\[ Pr \left( \cb_{ \mu n^{8/5} \log^{1/5} n} \right) \le n^{-1/6}. \]
\end{theorem}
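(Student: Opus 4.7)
The plan is to adapt the proof of Theorem~\ref{thm:lower} to this more intricate setting by partitioning $\cb_m$ according to which type of variable first escapes its allowable range. Write $\cb_m = \cq \cup \cx \cup \cy \cup \cz$, where $\cq,\cx,\cy,\cz$ correspond respectively to failures of $Q$, of some $|X_{A,f}|$, of some $|Y_{A,f}|$ with $f \in \{0,1,2\}$, and of some $|Y_{A,3}|$. As in the triangle-free case I expect $\cq$ to be essentially automatic: once all $X_{e_{i+1},4}(i)$ are in range for $i<j$, the deterministic identity writing $Q(j)$ as $\binom{n}{2}-j$ minus the total number of pairs newly closed through step $j$ can be integrated along the $q$ trajectory, the contribution at each step being $|X_{e_{i+1},4}|$ together with edge-incident corrections controlled by $|Y_{\cdot,3}|$ terms. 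This yields the required concentration of $Q$ on $\overline{\cx \cup \cz}$.

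For each fixed $A \in \binom{[n]}{2}$ and $f \in \{0,1,\ldots,4\}$ I would set up an $(\eta_f, N_f)$--bounded martingale pair $B_j^{\pm}$ following the template of the $|X_{u,v}|$ analysis in Section~\ref{sec:lower}. The set $X_{A,f}(i)$ changes in three ways at each step: a pair $B \in X_{A,f}$ is promoted to $X_{A,f+1}$ when $e_{i+1}$ is an edge of $\binom{A\cup B}{2}\setminus\binom{A}{2}$, removed from the family entirely when a pair in that set becomes closed, or a new pair enters from $X_{A,f-1}$. On $\omega \not\in \cb_{i-1}$ the conditional expectation of each contribution can be expressed in terms of the $Y$-variables, the other $X_{A,f'}$, and $Q$, all of which are within range; the residual discrepancy is absorbed into the polynomial $p(t)$ by taking its coefficients sufficiently large. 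Lemmas~\ref{lem:sub}~and~\ref{lem:super} applied to $B_j^{\pm}$ give a failure probability exponentially small in a positive power of $n$, and a union bound over $O(n^2)$ pairs $(A,f)$ bounds $\Pr(\cx)$. The analysis of $\cy$ for $Y_{A,f}$ with $f\in\{0,1,2\}$ and $A\in\binom{[n]}{3}$ is parallel, using an ``in minus out'' decomposition as in the $Y_{u,v}$ analysis; only a one-sided bound is needed, simplifying the setup.

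The event $\cz$ is bounded combinatorially rather than via martingale concentration. On $\overline{\cb_{i-1}}$ the probability that $|Y_{A,3}|$ increments at step $i+1$ is at most $|Y_{A,2}(i)|/|O_i| = O(t^2 n^{-9/5})$, so a Markov-type argument gives
\[
\Pr\bigl(|Y_{A,3}(m)| \ge 16\bigr) \le \binom{m}{16}\left(\frac{C \log^{2/5} n}{n^{9/5}}\right)^{16},
\]
and a union bound over $\binom{n}{3}$ choices of $A$ leaves a quantity comfortably below $n^{-1/6}$.

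The principal obstacle is the self-referential bookkeeping: the conditional expectation driving each $X_{A,f}$ martingale depends on the other $X_{A,f'}$, on $|Y_{A\cup B,3}|$ type quantities, and on $Q$, each of which is controlled only under the inductive assumption $\omega \not\in \cb_{i-1}$. Verifying that the polynomial $p(t)$ can be chosen with coefficients large enough to absorb all such cross-contributions and the $n^{-1/15}$ slack in the error functions, while keeping the martingale step-size parameters within the range where Lemmas~\ref{lem:sub}~and~\ref{lem:super} apply, is where the bulk of the technical work lies. The hard deterministic bound $|Y_{A,3}| \le 15$ is precisely what keeps the worst-case one-step jump in each $X_{A,f}$ martingale bounded and is used implicitly throughout these martingale verifications.
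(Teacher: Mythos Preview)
Your decomposition $\cb_m=\cq\cup\cx\cup\cy\cup\cz$ and the martingale / union-bound plan for $\cx$, $\cy$, $\cz$ match the paper, which indeed leaves those parts to the reader. The gap is in your treatment of $\cq$. You assert that the number of pairs closed when $e_{j}=\{u,v\}$ is added equals $|X_{e_{j},4}(j-1)|$ up to ``edge-incident corrections controlled by $|Y_{\cdot,3}|$ terms,'' but this does not hold. Each $B\in X_{e_j,4}(j-1)$ determines a unique open pair in $\binom{e_j\cup B}{2}\setminus\{e_j\}$ that becomes closed, so $|X_{e_j,4}(j-1)|$ is an \emph{overcount}; the excess equals $\sum_{w\in W}\bigl(|Y_{\{u,v,w\},3}(j-1)|-1\bigr)$, where $W$ is the set of vertices $w$ for which an incident pair is closed with multiplicity at least two. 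The bound $|Y_{\cdot,3}|\le 15$ caps each summand by $14$, but nothing in $\cb_{j-1}$ bounds $|W|$ itself: a priori $|W|$ can be of order a vertex degree, roughly $n^{3/5}$, while the per-step slack the error function $f_q$ affords for $Q$ is only of order $n^{29/15}/m\sim n^{1/3}$. So the ``automatic'' integration of $Q$ does not close.

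The paper supplies exactly this missing piece. It introduces two auxiliary density events $\cm_i$ and $\cn_i$ (forbidden rooted configurations of size $n^{1/5+8\epsilon}$ and $n^{6\epsilon}$ respectively) and shows by direct first-moment counts that $\Pr(\cm_m\wedge\overline{\cb_m})$ and $\Pr(\cn_m\wedge\overline{\cb_m})$ are negligible. On the complement of $\cb_{j-1}\cup\cm_{j-1}\cup\cn_{j-1}$ one extracts a subset $W'\subseteq W$ with pairwise-disjoint witness pairs, obtains $|W'|<n^{1/5+8\epsilon}$ from $\overline{\cm_{j-1}}$, and then $|W|\le 16n^{6\epsilon}|W'|$ from $\overline{\cn_{j-1}}$; this yields an overcount of at most $O(n^{1/5+14\epsilon})<n^{4/15}$, which is small enough for the deterministic sum controlling $Q$ to stay inside $q(t)n^2\pm f_q(t)n^{29/15}$. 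This density step is the one genuinely new ingredient in passing from $K_3$ to $K_4$, and your outline needs it (or an equivalent replacement) before the $\cq$ part goes through.
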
 
\begin{theorem}
\label{thm:dark} If \(n\) is sufficiently large then
\[ Pr \left( \alpha \left( G_{ \a n^{8/5} \log^{1/5} n } \right) > \g n^{2/5} \log^{4/5} n \mid 
\overline{{\mathcal B}_{m } } \right) < e^{- n^{1/15} }. \]
\end{theorem}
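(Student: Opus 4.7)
The proof will follow the template of Theorem~\ref{thm:upper}.  We condition throughout on $\overline{\cb_m}$, so that $Q$, $|X_{A,f}|$ and $|Y_{A,f}|$ all track their deterministic trajectories.  The plan is to show that, for any fixed $K\subseteq[n]$ with $|K|=k=\gamma n^{2/5}\log^{4/5}n$, the probability that $K$ is independent in $G_m$ is at most $\exp(-C\cdot n^{2/5}\log^{9/5}n)$ for a constant $C$ that may be made as large as desired by increasing $\gamma$.  Since $\binom{n}{k}\le\exp(O(n^{2/5}\log^{9/5}n))$, the union bound over $K$ then yields the theorem.  As in the $K_3$-free case, the mechanism is to show that at every step $i\le m$ at which $K$ remains independent the conditional probability $\Pr(e_{i+1}\in\binom{K}{2}\mid\fee_i)\ge c\log^{8/5}n/n^{6/5}$ for a universal constant $c$.

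First I would establish a max-degree bound: a.a.s.\ no vertex of $G_m$ has degree exceeding $\beta n^{3/5}\log^{1/5}n$, the $K_4$-analog of Claim~\ref{cl:degrees}.  The argument tracks $W_v(i)$, the number of open pairs containing $v$, and sets up a submartingale using the one-step closing probability $|X_{\{v,w\},4}(i)|/|O_i|$ in place of the partial-vertex term from the triangle-free case.  From $\overline{\cb_m}$ and the relation $dq/dt=-x_4$ one deduces that $|W_v(i)|$ lies within $O(n^{7/8})$ of $2q(t)n$; the same counting as in Claim~\ref{cl:degrees}, with the factor $(4/n)^{\beta n^{3/5}\log^{1/5}n}$ replacing $(4/n)^{\beta\sqrt{n\log n}}$, then closes the argument.

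Next I would prove a bipartite open-pair lower bound analogous to Claim~\ref{cl:obsv2}.  For disjoint $A,B\subseteq[n]$ with $|A|=|B|=k'=\Theta(n^{2/5}\log^{4/5}n)$, declare a pair $\{u,v\}\in A\times B$ to be \emph{closed with respect to $A,B$} if it is closed at some step $j\le i$ by an edge $\{x,y\}\subseteq[n]\setminus(A\cup B)$ in which neither $x$ nor $y$ has codegree into $A$ or into $B$ exceeding $k'/n^{\rho}$.  Letting $W_{A,B}(i)$ count the pairs in $A\times B$ that are open with respect to $A,B$, heuristics predict $W_{A,B}$ decays like $|X_{\{u,v\},4}(i)|/|O_i|$ per pair, and the centered quantity is an $(\eta,N)$-bounded supermartingale with the restricted codegree threshold providing $N=k'/n^{\rho}$.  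Lemmas~\ref{lem:sub}~and~\ref{lem:super} then yield $|W_{A,B}(j)|\ge e^{-16t^5}k'^2-O(n^{1-\rho/3})$ uniformly in $A,B$ and $j\le m$, with failure probability that absorbs the $\binom{n}{k'}^2$ union bound.

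Given $K$ and a step $i$ at which $K$ is independent, one identifies the set $L_i$ of vertices $x\notin K$ with $|N_i(x)\cap K|>k/n^{\rho}$ and the set $M_i$ of edges $\{x,y\}$ producing a closure structure of high density into $K$.  The codegree bound $|Y_{A,3}|\le 15$ inside $\cb_m$ together with the max-degree bound from Step~1 limits $|L_i|$ and $|M_i|$ to be polynomially smaller than $k$; one then carves out disjoint $A,B\subseteq K$ of size $k'$ whose cross-pairs $A\times B$ are essentially disjoint from the closure structures induced by $L_i\cup M_i$.  Combined with the bipartite bound this gives $|O_i\cap\binom{K}{2}|\ge c\cdot e^{-16t^5}k^2$, and hence the per-step lower bound above.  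The principal obstacle is this third step: in the $K_3$-free case closures are witnessed by single vertices and it suffices to discard those of high codegree into $A\cup B$, whereas in the $K_4$-free setting closures are witnessed by an edge $\{x,y\}$ together with the codegrees of $x$ and $y$ into $\{u,v\}$, a substantially richer family of structures.  Calibrating the definition of ``open with respect to $A,B$'' so that the supermartingale of Step~2 closes \emph{and} the covering in Step~3 removes only $o(e^{-16t^5}k^2)$ pairs is where the bulk of the work concentrates.
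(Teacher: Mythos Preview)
Your outline follows the template of Theorem~\ref{thm:upper} closely, but there is a structural gap that the paper's proof is built around and that your proposal does not address.

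In the triangle-free process the maximum degree is $\Theta(\sqrt{n\log n})$, the same order as $|K|$, and choosing $\gamma>\beta$ guarantees that no single vertex has $K\subseteq N(x)$; this is what makes the carving of $A,B\subseteq K$ in Section~\ref{sec:upper} work.  In the $K_4$-free process the maximum degree is of order $n^{3/5}\log^{1/5}n$, which is \emph{much larger} than $|K|=\gamma n^{2/5}\log^{4/5}n$.  Hence it is entirely possible that some vertex $x_1$ has $K\subseteq N_j(x_1)$.  In that situation every edge $\{x_1,y\}$ with $y$ having many common neighbours with $x_1$ inside $K$ closes a quadratic number of pairs in $K$, and no choice of disjoint $A,B\subseteq K$ avoids this: all of $K$ sits inside $N_j(x_1)$.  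Your Step~3 asserts that $|L_i|$ is polynomially smaller than $k$ and that one can then carve out $A,B$ avoiding the bad closure structures, but in this case the carving is impossible.  The paper handles this by introducing a \emph{second} tracked quantity: for a single $k$-set $D$ it defines ``closed with respect to $D$'' and proves the analogous supermartingale lower bound on open pairs inside $\binom{D}{2}$.  The argument then splits into Case~1 ($|N_j(x_1)\cap K|\ge k$: take $D\subseteq N_j(x_1)\cap K$ and use the $D$-tracking together with the triple-codegree bound $|Y_{A,3}|\le 15$) and Case~2 ($|N_j(x_1)\cap K|<k$: build $A,B$ from unions of the sets $N_j(x_\ell)\cap K$).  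Without the $D$-tracking your argument does not close.

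Two smaller points.  First, your definition of ``closed with respect to $A,B$'' only covers closing edges $\{x,y\}$ disjoint from $A\cup B$.  In the $K_4$-free process a pair $\{u,v\}$ can also be closed by an edge $e_i$ sharing exactly one endpoint with $\{u,v\}$; the paper's definition has a separate clause~(i) for this, with its own codegree threshold on the witnessing vertex $z$.  Without it the one-step change in $W_{A,B}$ is not bounded by the claimed $N$.  Second, the max-degree bound you propose in Step~1 does not play the role you assign it: since $n^{3/5}\gg|K|$ it does not prevent the bad case above, and it does not by itself give $|L_i|$ polynomially smaller than $k$.  The paper instead conditions on a co-degree bound of $n^{1/5+3\epsilon}$ and uses a density claim (for sets $X$ of size $k$ and $Y$ of size $r$ with each $y\in Y$ having $\ge s$ neighbours in $X$, one must have $rs<n^{2/5+\epsilon}$) to control $|L_j|$ and the sizes $|N_j(x_\ell)\cap K|$ in Case~2.
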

\noindent
The methods introduced in Sections~\ref{sec:lower}~and~\ref{sec:upper} can be used to
prove Theorems~\ref{thm:line}~and~\ref{thm:dark}.  This is more or less straightforward and is mostly left to
the reader; we conclude this section with the details that do not follow immediately as above.

\begin{proof}[Proof of Theorem~\ref{thm:line}]
There is one significant difference between the triangle-free process and the \(K_4\)-free process that must be
dealt with here.
In the case
of the triangle-free process, there is a one-to-one correspondence between edges closed when
\(e_i\) is added and vertices that are partial with respect to \(e_i\) in \( G_{i-1} \).  The analogous 
correspondence
does not hold for the \( K_4\)-free process:  Since a pair \( \{u,v\} \) that intersects \( e_i\) could
be a subset of \( B \cup e_i \) for many sets \( B \in X_{e_i,4}(i-1) \), there is not a one-to-one correspondence
between pairs \( \{u,v\}  \) closed by the addition of \( e_i \) to the graph and \( X_{e_i,4}(i-1) \).  In order
to overcome this problem we note that, based on simple density considerations, the difference between these two quantities
is bounded by \( n^{4/15} \) in the event \( \overline{ B_m} \).  

Let \( \epsilon \) be a sufficiently small constant
(This constant is chosen so that \( |O_i| \ge n^{2 - \epsilon} \) for all 
\( i \le m \) in the event
\( \overline{ \cb_m } \)).  Set \( k = n^{1/5 + 8 \e} \) and
let \( \cm_i \) be the event that there exists 
\( \{ u,v \} \in \binom{[n]}{2}  \), distinct vertices \( w_1, \dots, w_k \in [n] \setminus \{u,v\} \) 
and distinct  
\( z_1, z_2, \dots , z_{2k} \in [n] \setminus \{u,v, z_1, z_2, \dots, z_{2k} \} \) such that 
\[ \{ u, w_j\} , \{ u, z_{2j-1} \}, \{ u, z_{2j} \} , \{ v, z_{2j-1} \},
\{v, z_{2j}\}, \{ w_j, z_{2j-1}\}, \{ w_j, z_{2j} \} \in E_i  \text{ for }   j =1, \dots, k. \] 
\begin{claim} If \(n\) is sufficiently large
\[ Pr \left( \cm_m  \wedge \overline{ \cb_m } \right) \le  e^{ - n^{1/5}}. \]
\end{claim}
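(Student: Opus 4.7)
The plan is to bound $P[\cm_m \wedge \overline{\cb_m}]$ by an unordered union bound over configurations, where the crucial gain comes from exploiting the $k!\cdot 2^k$ symmetry of the structure in the index $j$ and within each pair $\{z_{2j-1},z_{2j}\}$.

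First I would assemble two inputs. The choice of $\epsilon$ made just before the claim guarantees $|O_i|\ge n^{2-\epsilon}$ for every $i\le m$ on $\overline{\cb_m}$, and a standard chain-rule over the possible insertion times of $\ell$ specific edges then gives, for any set $S$ of $\ell$ pairs,
\[
P[S\subseteq E_m \wedge \overline{\cb_m}] \;\le\; \bigl(m/n^{2-\epsilon}\bigr)^{\ell} \;=\; \bigl(\mu\, n^{-2/5+\epsilon} (\log n)^{1/5}\bigr)^{\ell}.
\]
Simultaneously, an instance of $\cm_m$ is specified by $\{u,v\}$ together with an unordered set of $k$ triples $(w_j,\{z_{2j-1},z_{2j}\})$ with all $w$'s and $z$'s distinct, so the number of configurations is at most $n^{2}\cdot n^{3k}/(k!\,2^k)$.

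Combining these with $\ell = 7k$ and Stirling's bound $k!\ge(k/e)^k$ yields
\[
P[\cm_m \wedge \overline{\cb_m}] \;\le\; \frac{n^{2+3k}}{k!\,2^k}\bigl(\mu\, n^{-2/5+\epsilon} (\log n)^{1/5}\bigr)^{7k} \;\le\; n^{2}\!\left(\frac{e\mu^{7}(\log n)^{7/5}}{2}\cdot\frac{n^{1/5+7\epsilon}}{k}\right)^{\!k}.
\]
Substituting $k=n^{1/5+8\epsilon}$ collapses $n^{1/5+7\epsilon}/k$ to $n^{-\epsilon}$, so the inner quantity is $\le n^{-\epsilon/2}$ for $n$ large, giving
\[
P[\cm_m \wedge \overline{\cb_m}] \;\le\; n^{2}\, n^{-\epsilon k/2} \;=\; \exp\!\Bigl(2\log n - \tfrac{\epsilon}{2}\,n^{1/5+8\epsilon}\log n\Bigr) \;\le\; e^{-n^{1/5}}
\]
for $n$ sufficiently large, since $n^{8\epsilon}\log n\to\infty$.

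The main obstacle is extracting the $k!\cdot 2^k$ symmetry cleanly: the naive ordered union bound leaves a factor $n^{k(1/5+7\epsilon)}$ that grows in $k$ and destroys the estimate; only after dividing by $k!$ and invoking Stirling does the ratio $n^{1/5+7\epsilon}/k$ become $n^{-\epsilon}$, and the threshold $k=n^{1/5+8\epsilon}$ is tuned precisely so that this per-index surplus compounds into stretched-exponential decay. The edge-probability estimate itself is routine on $\overline{\cb_m}$, provided $\mu$ is small enough that $q(t)n^2=\tfrac12 e^{-16t^5}n^2$ dominates $f_q(t)n^{29/15}$ throughout $t\in[0,\mu(\log n)^{1/5}]$.
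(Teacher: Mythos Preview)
Your proof is correct and follows essentially the same first-moment argument as the paper: both bound the number of configurations by roughly $n^{2+3k}/k!$, bound the probability of any fixed $7k$-edge configuration by $(m/n^{2-\epsilon})^{7k}$ using the lower bound $|O_i|\ge n^{2-\epsilon}$ on $\overline{\cb_m}$, and then apply Stirling so that the ratio $n^{1/5+7\epsilon}/k=n^{-\epsilon}$ drives the estimate. The only cosmetic difference is that the paper realises the $k!$ via $\binom{n}{k}$ for the $w_j$'s and keeps the $z$'s ordered (so it does not extract your extra $2^k$, which is harmless either way).
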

\begin{proof}
\begin{equation*}
\begin{split}
Pr \left( \cm_m \wedge \overline{ \cb_m } \right) & \le n^2 \cdot \binom{n}{k} \cdot n^{2k} \cdot m^{ 7k } \left( \frac{1}{ n^{2- \epsilon}} \right)^{7k} \\
& \le n^2 \left(  \frac{ e n^{3} \cdot ( \mu n^{8/5} \log^{1/5} n )^7}{ k \cdot n^{14 - 7 \epsilon}} \right)^k \\
& = n^2 \left( \frac{ e \mu^7 n^{1/5 + 7 \epsilon} \log^{7/5} n }{k} \right)^k. 
\end{split}
\end{equation*}
\end{proof}
\noindent
Now let \( \ell = n^{6\e} \) and
let \( \cn_i \) be the event that there exist vertices \(u,v,z \) and disjoint 
sets \(A ,B \in \binom{[n]}{ \ell}  \) 
such that \( A \subseteq N_i(u) \cap N_i(z) \) , \( B \subseteq N_i(u) \cap N_i(v) \) and \(G_i\) has
a matching of \( \ell \) edges in \( A \times B \).
\begin{claim}
If \(n\) is sufficiently large then
\[ Pr( \cn_m \wedge \overline{ \cb_m } ) \le e^{- n^{5\e}}.   \]
\end{claim}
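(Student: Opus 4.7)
The plan is a first-moment union bound, analogous to the treatment of $\mathcal{M}_m$ just above. The only input from $\overline{\mathcal{B}_m}$ that is actually used is the uniform lower bound $|O_i| \ge n^{2-\epsilon}$ for all $i \le m$, which is a consequence of the concentration of $Q$ combined with the smallness of $\epsilon$.

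First I would enumerate the witnesses of $\mathcal{N}_m$. Such a witness is a triple $(u,v,z)$ of distinct vertices, together with disjoint sets $A, B \in \binom{[n]}{\ell}$ avoiding $\{u,v,z\}$ and a bijection $\pi \colon A \to B$ encoding the matching. Bounding the number of matchings by $\ell!$, the total count is
\[
n^{3} \binom{n}{\ell}^{2} \ell! \;\le\; n^{3} \cdot \frac{n^{2\ell}}{\ell!},
\]
using $\binom{n}{\ell} \le n^{\ell}/\ell!$. For such a witness to be realized in $G_m$, the following $5\ell$ specific pairs must all lie in $E_m$: the $2\ell$ pairs $\{u,w\}$ for $w \in A \cup B$, the $\ell$ pairs $\{z,a\}$ for $a \in A$, the $\ell$ pairs $\{v,b\}$ for $b \in B$, and the $\ell$ matching pairs $\{a,\pi(a)\}$. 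By the disjointness assumptions these $5\ell$ pairs are pairwise distinct.

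Second I would establish the standard per-edge-set probability bound: for any fixed family $S$ of $k$ pairs,
\[
\Pr\bigl[S \subseteq E_m \wedge \overline{\mathcal{B}_m}\bigr] \;\le\; \left(\frac{m}{n^{2-\epsilon}}\right)^{k}.
\]
This follows by union-bounding over the at most $m^{k}$ injections from $S$ into $\{1,\dotsc,m\}$ (assigning each pair to the step at which it appears), and noting that $\Pr[e_{i+1}=p \mid \mathcal{F}_i] \le 1/|O_i|$, which is at most $1/n^{2-\epsilon}$ on $\overline{\mathcal{B}_m}$. Taking $k = 5\ell$ and combining with the configuration count gives
\[
\Pr\bigl(\mathcal{N}_m \wedge \overline{\mathcal{B}_m}\bigr) \;\le\; n^{3} \cdot \frac{n^{2\ell}}{\ell!} \cdot \left(\frac{\mu n^{8/5} \log^{1/5} n}{n^{2-\epsilon}}\right)^{\!5\ell}.
\]

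Third I would cash in the arithmetic. The powers of $n$ inside $(m/n^{2-\epsilon})^{5\ell}$ contribute $n^{(-2+5\epsilon)\ell}$, which neutralises the $n^{2\ell}$ from the counting and leaves a residual $n^{5\epsilon\ell}$. The essential saving is $\ell! \ge (\ell/e)^{\ell}$: since $\ell = n^{6\epsilon}$ this is at least $e^{-\ell} n^{6\epsilon\ell}$, which beats $n^{5\epsilon\ell}$ by a full factor of $n^{\epsilon\ell}$. Collecting the remaining $\mu$- and $\log n$-dependent terms, the bound takes the form
\[
n^{3} \cdot (e\mu^{5} \log n)^{\ell} \cdot n^{-\epsilon\ell} \;=\; \exp\!\bigl(-\epsilon \ell \log n + O(\ell \log\log n) + O(\log n)\bigr),
\]
which, since $\ell = n^{6\epsilon}$ and $n^{6\epsilon}\log n \gg n^{5\epsilon}$, is at most $e^{-n^{5\epsilon}}$ for $n$ large. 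The one delicate point to watch is to keep the $\ell!$ from the matching count: without it the surviving $n^{5\epsilon\ell}$ is catastrophic, while with it the factor $\ell^{\ell} = n^{6\epsilon\ell}$ supplies exactly the $n^{\epsilon\ell}$ of slack that makes the argument go through. Every other piece is routine.
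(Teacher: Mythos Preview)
Your argument is correct and is essentially the paper's own proof: a first-moment union bound over configurations, using only the bound $|O_i|\ge n^{2-\epsilon}$ from $\overline{\mathcal{B}_m}$ to control the per-step edge probability, with the crucial $\ell!$ saving coming from the matching/set enumeration. Your configuration count $n^{3}\binom{n}{\ell}^{2}\ell!\le n^{3}n^{2\ell}/\ell!$ and the paper's $n^{3}\binom{n}{\ell}n^{\ell}$ reduce to the same bound, and the resulting arithmetic $(e\mu^{5}\log n)^{\ell}n^{-\epsilon\ell}$ is identical.
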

\begin{proof}
\begin{equation*}
\begin{split}
Pr \left( \cn_m \right) & \le n^3 \cdot \binom{n}{ \ell } \cdot n^{ \ell } \cdot m^{5 \ell} \left( \frac{1}{ n^{2 - \e}} \right)^{5 \ell} \\
& \le n^3 \left( \frac{ e n^2 \cdot ( \mu n^{8/5} \log^{4/5} n)^5}{ \ell \cdot n^{10-5\e}} \right)^\ell .
\end{split}
\end{equation*}
\end{proof}
\noindent
Now suppose \( \omega_{j-1} \not\subseteq \cb_{j-1} \vee \cm_{j-1} \vee \cn_{j-1} \).  Let \(W\) be the set of vertices 
\(w\) such that \( \{v,w\} \) is closed by the addition of the edge \( e_j = \{u,v\} \) and
there exist distinct vertices \( z_w, z_w^\prime \) such that \( \{w, z_w\}, \{ w, z_w^\prime\} \in X_{ e_j,4}(j-1) \).
Note that
\( \omega_{j-1} \not\subseteq \cb_{j-1} \) implies that
the number of pairs \(B \in X_{e_{j},4}(j-1) \) 
that correspond to a particular vertex \( w \in W \) is at 
most \( 16 \).  Thus the difference
between \( | X_{e_j,4}(j-1) | \) and the number of pairs closed by the 
addition of edges \( e_j \) is at most 
\( 32 |W| \).  It remains to argue that \( W \) is small.  First note that 
\( \omega_{j-1} \not\subseteq \cn_{j-1} \) implies that each vertex \( z \) 
is in the set \( \{ z_w, z_w^\prime \} \) for at most \( 16 n^{6 \e} \) vertices 
\(w \in W \).
Let \( W^\prime \subseteq W \) be a maximum 
set such that \( a,b \in W^\prime \) implies 
\( \{ z_a, z^\prime_a \} \cap \{ z_b , z^\prime_b \} = \emptyset \).  By the previous observation (using 
\( \omega_{j-1} \not\subseteq \cn_{j-1} \) )
we have 
\( |W^\prime| \ge |W|/ (16n^{6\e}) \).
Furthermore, \( \omega_{j-1} \not\subseteq \cm_{j-1} \) implies that \( |W^\prime| < n^{1/5 + 8\e} \).  Thus, the 
number of pairs closed
by the addition of \( e_j \) is in the interval
\[  \left[ \left|  X_{e_j,4}(j-1) \right| - 32 \cdot 16 n^{1/5 + 14\epsilon},  \left|  X_{e_j,4}(j-1) \right| \right], \]
which is sufficient for the proof.
\end{proof}

\begin{proof}[Proof of Theorem~\ref{thm:dark}]
As in the proof of Theorem~\ref{thm:upper}, we fix a set \( K \) of \( \gamma n^{2/5} \log^{4/5}n \) vertices
and show that the probability that \(K\) remains independent is small even when compared with the number of such sets.
We condition on \( \overline{ \cb_m } \) and a bound of \( n^{1/5+ 3\e} \) on all co-degrees.  (This bound on the co-degrees follows from a very simple 
first moment calculation.  We could establish a tighter bound using martingale inequalities, but that is not 
necessary for this argument.)

There are two significant 
differences between the triangle-free process and the \(K_4\)-free process here: the fact that 
in the latter the addition of an
edge \( e_i \) that is disjoint from \(K\) could close many pairs within \(K\) and the fact that
the neighborhood of a single vertex could include \(K\) as a subset.  

We track the number of open pairs within two kinds of subgraphs.
Set \[ k =  \frac{\g}{3} n^{2/5} \log^{4/5} n .\]  
Let \( A, B \in \binom{[n]}{k} \).  We say that 
a pair \( \{u,v\} \in A \times B \)
is {\bf closed with respect to \( A \times B \) } at step \(j\) 
if there exists a step \(i \le j \) such that
\( \{u,v\} \) is among the edges closed by \( e_i = \{x,y\}\) and either
\begin{itemize}
\item[(i)] \( e_i \cap (A \cup B)  = \{y\} = \{u\} \) and there exists \(z \notin A \cup B\) such that
\( \{v,z\} \in X_{\{x,y\},4} \) and \( | N_{i-1}(z) \cap N_{i-1}(x) \cap ( A \cup B)| <  n^{1/5 - \r - 3\e}  \) or
\item[(ii)] \( e_i \cap ( A \cup B ) = \emptyset \) and ( \( | N_{i-1}(e_i) \cap A| \le n^{1/5 - \r - 3\e}  \) or 
\( \ | N_{i-1}(e_i) \cap  B | \le n^{1/5- \r -3\e} \) ).
\end{itemize}
If the pair \( \{u,v\} \in A \times B \) is neither closed with respect to \( \{u,v\} \) nor in the edge 
set \( E_j\) then it is { \bf open with respect to \( A \times B \) }.  Note that, since we assume 
co-degrees are bounded by \( n^{1/5+ 3\e} \), the change in the number of pairs closed with respect to \( A \times B \)
that results from the addition of an edge \( e_i\) is at most \( n^{2/5-\r} \).
It follows from the techniques in
Section~\ref{sec:upper} that with
high probability we have the following: For all steps \( j \le m\) and all pairs \(A,B \in \binom{[n]}{k} \) such that
\( (A \times B) \cap E_j = \emptyset\) the number of edges in \( A \times B \) that are open with respect to \(A \times B \)
is at least \( \frac{k^2}{2} e^{ - 16 t(j)^5 } \). 

We also track the number of open pairs within sets \(D\) consisting of \(k\) vertices.  We say that 
a pair \( \{u,v\} \in \binom{D}{2} \)
is {\bf closed with respect to \( D \) } at step \(j\) 
if there exists a step \(i \le j \) such that
\( \{u,v\} \) is among the edges closed by \( e_i = \{x,y\}\) and either
\begin{itemize}
\item[(i)] \( e_i \cap D  = \{y\} = \{u\} \) and there exists \(z \not\in D\) such that
 \( | N_{i-1}(z) \cap N_{i-1}(x) \cap D| <  n^{1/5 - \r - 3\e}  \) and \( \{v,z\} \in X_{\{x,y\},4} \).
\item[(ii)] \( e_i \cap D = \emptyset \) and \( | N_{i-1}(x) \cap N_{i-1}(y) \cap D| < n^{1/5 - \r/2}  \).
\end{itemize}
If the pair \( \{u,v\} \in  \binom{D}{2} \) is neither closed with respect to \( \{u,v\} \) nor in the edge 
set \( E_j\) then it is { \bf open with respect to \( D \) }.  Again following the techniques in
Section~\ref{sec:upper}, we see that with
high probability we have the following: For all steps \( j \le m\) and all sets \(D \in \binom{[n]}{k} \) such that
\( \binom{D}{2} \cap E_j = \emptyset\) the number of edges in \( \binom{D}{2} \) that are open with respect to \( D \)
is at least \( \frac{k^2}{4} e^{ - 16 t(j)^5 } \).

It remains to show that every set \(K\) of \( \gamma n^{2/5} \log^{4/5} n \)
vertices contains:
\begin{itemize}
\item[(a)] Disjoint sets 
\(A,B\) of \(k\) vertices such the difference between the number of pairs in \(A \times B \) that are open and the number that
are open with respect to \( A \times B\) is less than, say, \( n^{23/30} \), or
\item[(b)] A set \( D \) of \(k\) vertices such the difference between the number of pairs within \( D \) that are open and the number that
are open with respect to \( D\) is less than \( n^{23/30} \).
\end{itemize}
A main tool here is the following observation which follows from a simple first moment calculation.
Let \( \cm_{i} \) be the event that there exist integers \(r,s\) such that \( s \ge n^{2 \epsilon} \), 
\( r\cdot s \ge n^{2/5 + \e} \) and 
disjoint sets \( X \in \binom{[n]}{k} \) and 
\( Y \in \binom{[n]}{ r  } \) such that 
\[ \left| N_m(y) \cap X \right| \ge s  \ \ \ \text{ for all } \ \ \ y \in Y. \]
\begin{claim}
\label{cl:density} If \(n\) is sufficiently large then
\( Pr\left(  \cm_m \right) \le e^{- n^{2/5} } \).
\end{claim}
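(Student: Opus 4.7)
The plan is a first-moment/union-bound calculation in the spirit of the two density claims proved inside Theorem~\ref{thm:line}. I will iterate over all admissible pairs $(r,s)$, disjoint sets $X\in\binom{[n]}{k}$ and $Y\in\binom{[n]}{r}$, and, for each $y\in Y$, a prescribed $s$-element subset $N_y\subseteq X$; the event $\cm_m$ occurs exactly when some such configuration realises all $rs$ of its $Y\times X$ pairs as edges of $G_m$. The number of configurations is at most
\[
\binom{n}{k}\binom{n}{r}\binom{k}{s}^{r}\;\le\;\Bigl(\tfrac{en}{k}\Bigr)^{k}\Bigl(\tfrac{en}{r}\Bigr)^{r}\Bigl(\tfrac{ek}{s}\Bigr)^{rs}.
\]

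The probabilistic input is the standard bound on the chance that a prescribed family of $t$ pairs all lie in $E_m$. As in the proof of Theorem~\ref{thm:line}, I will work inside $\overline{\cb_m}$, where $|O_i|\ge n^{2-\e}$ for all $i\le m$, so any open pair at step $i$ is chosen as $e_{i+1}$ with conditional probability at most $n^{-(2-\e)}$. Summing over the $\binom{m}{t}t!\le m^{t}$ assignments of the $t$ pairs to $t$ of the $m$ steps and chaining the conditional probabilities step by step yields
\[
Pr\bigl[\,e_{1},\dots,e_{t}\in E_m\wedge\overline{\cb_m}\,\bigr]\;\le\;\bigl(m/n^{2-\e}\bigr)^{t}.
\]
Taking $t=rs$ and combining with the configuration count gives
\[
Pr(\cm_m\wedge\overline{\cb_m})\;\le\;\sum_{r,s}\Bigl(\tfrac{en}{k}\Bigr)^{k}\Bigl(\tfrac{en}{r}\Bigr)^{r}\Bigl(\tfrac{ekm}{sn^{2-\e}}\Bigr)^{rs}.
\]

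To see this is at most $e^{-n^{2/5}}$, note that $km=O(n^{2}\log n)$, so the base of the $rs$-th power satisfies $ekm/(sn^{2-\e})\le(\log n)/n^{\e}$ whenever $s\ge n^{2\e}$, contributing $\exp\{-\tfrac{\e}{2}rs\log n\}$ for $n$ large. The $X$-factor costs only $\exp\{O(k\log n)\}=\exp\{O(n^{2/5}\log^{9/5}n)\}$, and the $Y$-factor is at most $\exp\{r\log n\}\le\exp\{\tfrac{\e}{4}rs\log n\}$ because $s\ge n^{2\e}$. The constraint $rs\ge n^{2/5+\e}$ is precisely what is needed for the surviving $-\Omega(\e rs\log n)$ term to swallow the $X$-cost, and the polynomial number of admissible $(r,s)$ pairs is harmless; the bound $e^{-n^{2/5}}$ follows (strictly speaking inside $\overline{\cb_m}$, which is the only form in which the claim is invoked in the proof of Theorem~\ref{thm:dark}). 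The main technical point to be careful about will be the honest chain-rule justification of the estimate $(m/n^{2-\e})^{t}$: one must commit to $\overline{\cb_i}$ at each conditioning step so that the lower bound $|O_i|\ge n^{2-\e}$ is legitimately available throughout the product.
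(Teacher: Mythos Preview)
Your proposal is correct and is precisely the ``simple first moment calculation'' the paper alludes to (the paper gives no further details for this claim). Your observation that the argument really bounds $Pr(\cm_m\wedge\overline{\cb_m})$ rather than $Pr(\cm_m)$ is accurate and consistent with how the claim is used in the surrounding proof of Theorem~\ref{thm:dark}, which is carried out conditionally on $\overline{\cb_m}$; the paper's earlier density claims in the proof of Theorem~\ref{thm:line} are stated with the explicit $\wedge\,\overline{\cb_m}$ for exactly this reason.
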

%
\noindent
Now, let \( L_j \) be the set of vertices \(x \not\in K \) such that
\[ | N_j(x) \cap K | \ge  n^{1/5- \r -3\e} .\]  
Let \( L_j = \{ x_1, x_2, \dots \} \) be arranged in decreasing order of
\( | N_j(x_\ell) \cap K | \).  A simple case analysis 
in conjuction with Claim~\ref{cl:density} now establishes the desired property.  

\vskip1mm

\noindent
{\bf Case 1.} \( |N_j( x_1) \cap K| \ge k \).

\vskip1mm
\noindent
Consider \( D \subseteq N_j(x_1) \cap K \) such that \( |D| = k \).  Note that, appealing to the bound on
common neighbors of triples of vertices given by conditioning on \( \overline{\cb_m} \), all pairs within \(D\) that are closed
but not closed with respect to \(D\) are contained in \( N_j(x_1) \cap N_j(y) \cap D \) where \( y \in L_j\) and 
\( \{x_1,y \} \in E_j \).   The number of such vertices \(y\) is at most \( n^{1/5 + \r + 4\e} \) by Claim~\ref{cl:density}.  
Each such neighborhood includes less than \( n^{2/5 + 6\e} \) edges because of the bound on the co-degrees.  Therefore, the
number of spoiled pairs within \(D\) is at most \( n^{3/5 + \r + 10\e} \).

\vskip1mm

\noindent
{\bf Case 2.}  \( |N_j( x_1) \cap K|  < k \).

\vskip1mm

\noindent
Choose
\[ A \subseteq \bigcup_{\ell=1}^{\ell^\prime} N_j(x_\ell) \cap K \ \ \ \ \ \ \ 
B \subseteq K \setminus \left( \bigcup_{\ell=1}^{\ell^\prime} N_j(x_\ell) \right)  \]
such that \( |A| = |B| =k \)
where \(\ell^\prime \) is the smallest index such that the cardinality of this 
union is at least \(k\).  

First suppose \( \ell^\prime < n^{2/15} \).  
Note that no pairs in \(A \times B \) are spoiled in this case:  If \( x, y \not\in K \)
then either \( N_j(x) \cap N_j(y) \subseteq A \) or \( |N_j(x) \cap N_j(y) \cap A| \le 16 n^{2/15} \)
(using the bound on common neighbors of triples of vertices).

Finally, suppose \(  \ell^\prime \ge n^{2/15} \).  Note that, by Claim~\ref{cl:density}, we
have \( |N_j(x_{\ell^\prime}) \cap K| \le n^{4/15+\e} \) and \( |L_j| \le n^{1/5 + \r + 4\e} \).  Thus, the number of
spoiled pairs is at most  \( n^{8/15 + 2\e} n^{1/5+ \r + 4\e} = n^{11/15 + \r + 6\e} \).

\end{proof}

\section{Martingale Inequalities}
\label{sec:ah}

Lemmas~\ref{lem:sub}~and~\ref{lem:super} follow from the original  
martingale inequality of Hoeffding. 
\begin{theorem}[Hoeffding \cite{hoe}]
\label{thm:hoe}
Let \( 0 \equiv X_0, X_1, \dots \) be a sequence of random variables such that 
\[ X_{k-1} - \mu_k  \le X_k  \le  X_{k-1} + 1- \mu_k \]
for some constant \( 0 < \mu_k < 1 \) for k =1, \dots, m.
Set 
\( \mu = \frac{1}{m} \sum_{k=1}^m \mu_k \) and \( \overline{\mu} = 1- \mu \).
If \( X_0, X_1, \dots \) is a supermartingale and \( 0 < t < \overline{ \mu } \) then
\begin{equation}
\label{eq:hoe}
Pr \left( X_m \ge mt \right) \le \left[  \left[ \frac{ \mu}{ \mu+t} \right]^{\mu +t} 
\left[ \frac{ \overline{ \mu}}{ \overline{\mu} -t} \right]^{ \overline{\mu} -t} \right]^m. 
\end{equation}
\end{theorem}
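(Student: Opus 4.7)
The plan is to prove Theorem~\ref{thm:hoe} via the classical Chernoff--Cram\'er exponential method.  For any $\lambda > 0$, Markov's inequality immediately yields
\[ Pr(X_m \ge mt) \le e^{-\lambda m t}\, E[e^{\lambda X_m}], \]
so the entire task reduces to bounding the moment generating function $E[e^{\lambda X_m}]$ from above and then choosing $\lambda$ optimally.

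First I would pass to the martingale differences $Y_k = X_k - X_{k-1}$, which lie in $[-\mu_k,\, 1-\mu_k]$ and satisfy $E[Y_k \mid \mathcal{F}_{k-1}] \le 0$ by the supermartingale property.  Using convexity of $y \mapsto e^{\lambda y}$ on $[-\mu_k, 1-\mu_k]$, I would dominate $e^{\lambda y}$ by the linear interpolation between its endpoint values,
\[ e^{\lambda y} \le \bigl((1-\mu_k) - y\bigr) e^{-\lambda \mu_k} + \bigl(y + \mu_k\bigr) e^{\lambda(1-\mu_k)}, \]
and take conditional expectation.  Since the coefficient of $Y_k$ on the right is $e^{\lambda(1-\mu_k)} - e^{-\lambda \mu_k} > 0$ for $\lambda > 0$, the supermartingale hypothesis $E[Y_k \mid \mathcal{F}_{k-1}] \le 0$ only sharpens the estimate, yielding
\[ E[e^{\lambda Y_k} \mid \mathcal{F}_{k-1}] \le e^{-\lambda \mu_k}\bigl[(1-\mu_k) + \mu_k e^{\lambda}\bigr]. \]
Iterating by successive conditioning on $\mathcal{F}_{m-1}, \mathcal{F}_{m-2}, \dots$ and using $X_0 \equiv 0$ then produces
\[ E[e^{\lambda X_m}] \le e^{-\lambda m\mu}\prod_{k=1}^m \bigl[(1-\mu_k) + \mu_k e^{\lambda}\bigr]. \]

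Next I would reduce to the average $\mu$ via concavity.  Since $\mu \mapsto \log\bigl(1 + \mu(e^\lambda - 1)\bigr)$ is concave in $\mu$, Jensen's inequality gives
\[ \prod_{k=1}^m \bigl[(1-\mu_k) + \mu_k e^{\lambda}\bigr] \le \bigl[\overline{\mu} + \mu e^{\lambda}\bigr]^m, \]
and combining with the preceding steps produces
\[ Pr(X_m \ge mt) \le e^{-\lambda m(t+\mu)}\bigl[\overline{\mu} + \mu e^\lambda\bigr]^m. \]
The last step is a one-variable optimization: setting the derivative in $\lambda$ to zero gives $e^{\lambda^*} = \overline{\mu}(t+\mu)/\bigl(\mu(\overline{\mu} - t)\bigr)$, which corresponds to a legitimate $\lambda^* > 0$ precisely under the assumed condition $0 < t < \overline{\mu}$.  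Plugging $\lambda^*$ back in (the key simplification being $\overline{\mu} + \mu e^{\lambda^*} = \overline{\mu}/(\overline{\mu}-t)$) and collecting exponents of $t+\mu$ and $\overline{\mu}-t$ yields exactly the bound (\ref{eq:hoe}).

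The main obstacle I anticipate is the Jensen step reducing the product indexed by the individual $\mu_k$ to a single power involving only their average $\mu$; without it the subsequent optimization would not admit a clean closed form matching (\ref{eq:hoe}).  Identifying $\log\bigl(1 + \mu(e^\lambda-1)\bigr)$ as concave in $\mu$ is what makes this transparent.  Everything else is routine: the convex interpolation on a bounded interval, a telescoping MGF bound via the tower property, and a standard Chernoff-style optimization with slightly tedious but straightforward algebraic bookkeeping.
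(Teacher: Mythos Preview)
Your argument is correct and is essentially Hoeffding's original Chernoff--Cram\'er proof: bound the conditional moment generating function of each increment by the chord of the exponential on $[-\mu_k,1-\mu_k]$, use the supermartingale condition to drop the linear term, telescope via the tower property, apply Jensen's inequality in the concave variable $\mu$ to replace the product over $\mu_k$ by the single factor $(\overline\mu+\mu e^\lambda)^m$, and then optimize in $\lambda$. Each step checks out, including the algebra at the end (note $t+\mu-1=-(\overline\mu-t)$ when collecting exponents).

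There is nothing to compare against, however: the paper does not supply a proof of Theorem~\ref{thm:hoe}. It quotes the result from Hoeffding~\cite{hoe}, remarks that ``Hoeffding's result was for martingales, but the extension to supermartingales is straightforward,'' and then uses it as a black box to derive Lemmas~\ref{lem:sub} and~\ref{lem:super}. Your write-up is exactly that ``straightforward extension'' made explicit --- the supermartingale hypothesis enters only through the sign of $e^{\lambda(1-\mu_k)}-e^{-\lambda\mu_k}$, as you observe.
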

\noindent
Hoeffding's result was for martingales, but the extension to 
supermartingales is straightforward.   For a survey of applications of this and similar
results, see McDiarmid \cite{mcd}.

In order to apply Theorem~\ref{thm:hoe} 
to the martingales considered in this paper, we introduce the following function.  For \( 0<v <1/2\) set \( \ov = 1-v \) and
define
\[ g(x) = g(x,v) = ( v + x v ) \log \left( \frac{ v}{ v + x v} \right) + ( \ov - x v) \log \left( \frac{ \ov}{ \ov - x v} \right)
\    \ \ \ \text{ for } -1 < x < 1.
\]
Note that, under the conditions of Theorem~\ref{thm:hoe}, we have
\[  Pr (X_m \ge m x \mu) \le e^{ g(x,\mu) m } \ \ \ \ \ \  \text{ and } \ \ \ \ \ \ 
 Pr (X_m \ge m x \om) \le e^{ g(-x,\om) m }. \]
Note further 
\[  g^{\prime \prime}(x) = - \frac{ v}{ (1+x)(\ov - x v) }.\]

\begin{proof}[Proof of Lemma~\ref{lem:sub}]
Let \( 0 \equiv A_0, A_1, \dots \) be a \( (\eta, N) \)-bounded submartingale with \(N \ge 2 \eta\). Let \( a \le m \eta \).  Define
\( X_i = -A_i/( \eta + N) \).  Note that
Theorem~\ref{thm:hoe}  applies to 
\( X_0, X_1, \dots \) with \( \mu = N/( \eta + N) \).  Thus
\[
Pr( A_m \le -a) = Pr\left( X_m \ge \frac{a}{ \eta + N } \right) \le \exp\left\{g\left( - \frac{a}{m \eta}, \om  \right) m \right\}.
\]
It remains to bound \( g(x) \).  Note that if \( -1 < x \le 0 \) then \(  g^{\prime \prime}(x) \le - v \).  
As \( g(0)=g^\prime(0)=0 \), it follows that \( g(x) \le - v x^2/2 \) for \( -1 \le x \le 0 \).  Therefore,
\[ Pr( A_m \le -a) \le \exp \left\{ - \frac{ \eta }{ N + \eta } \frac{ a^2 m}{2 m^2 \eta^2} \right\}
\le \exp \left\{ - \frac{ a^2 }{2 m \eta(N+ \eta)} \right\}. \]
\end{proof}

\begin{proof}[Proof of Lemma~\ref{lem:super}]
Let \( 0 \equiv A_0, A_1, \dots \) be a \( (\eta, N) \)-bounded supermartingale with \( N \ge 10 \eta\).  
Let \( a < \eta m \).
Define \( X_i = A_i/( \eta + N) \).   Theorem~\ref{thm:hoe} applies to 
\( X_0, X_1, \dots \), with \( \mu = \eta/( \eta + N) \).  We have
\[ Pr( A_m \ge a) = Pr\left( X_m \ge \frac{a}{ \eta + N } \right) \le \exp\left\{g\left(  \frac{a}{m \eta}, \mu  \right) m \right\}.  \]
It remains to bound \( g(x) \).  Note that for \( x \ge 0 \) we have
\( g^{\prime \prime}(x) \le -v /(1+x) \).
Since \( g(0)=g^\prime(0)=0 \), this implies
\begin{gather*}
g(x) \le - v \left[ (1+x)\log(1+x) -x \right]  
\le v \left[ -\frac{x^2}{2} + \frac{ x^3}{6} - \frac{ x^4}{12} + \frac{ x^5}{20} \right] \le -\frac{11}{30} v x^2.  
\end{gather*}
Thus
\[
 Pr( A_m \ge a)
\le \exp \left\{ - \frac{11}{30} \frac{ a^2}{ m \eta (N + \eta)}   \right\} 
\le \exp \left\{ - \frac{ a^2}{ 3 m \eta  N}   \right\}.
\]
\end{proof}

\vskip5mm

\noindent
{\bf Acknowledgement.}  The author thanks Alan Frieze, Peter Keevash and Joel Spencer for
many useful comments on earlier drafts of this paper.

\end{document}